\numberwithin{equation}{section}
\newtheorem{theorem}{Theorem}[section]
\newtheorem{lemma}[theorem]{Lemma}
\theoremstyle{definition}
\newtheorem{remark}[theorem]{Remark}
\theoremstyle{remark}
\newcommand{\R}{\mathbb R}
\newcommand{\B}{{\mathcal B}}
\newcommand{\F}{{\mathcal F}}
\def\Xint#1{\mathchoice 
{\XXint\displaystyle\textstyle{#1}}%
{\XXint\textstyle\scriptstyle{#1}}%
{\XXint\scriptstyle\scriptscriptstyle{#1}}%
{\XXint\scriptscriptstyle\scriptscriptstyle{#1}}%
\!\int} 
\def\XXint#1#2#3{{\setbox0=\hbox{$#1{#2#3}{\int}$} 
\vcenter{\hbox{$#2#3$}}\kern-.5\wd0}}
\def\avgint{\Xint-}
\DeclareMathOperator*{\esssup}{ess\,sup}
\DeclareMathOperator*{\essinf}{ess\,inf}
\newcommand{\pp}{{p(\cdot)}}
\newcommand{\Lp}{L^{p(\cdot)}}
\newcommand{\Pp}{\mathcal P}
\newcommand{\qq}{{q(\cdot)}}
\newcommand{\M}{\mathcal{M}}
\newcommand{\Rdf}{\mathcal{R}}
\newcommand{\qk}{{q_i(\cdot)}}
\newcommand{\tk}{{\theta_i(\cdot)}}
\begin{document}

\title[Multilinear fractional operators on weighted Hardy spaces]{Multilinear fractional Calder\'on-Zygmund  operators on weighted Hardy spaces}

\author[Cruz-Uribe]{David Cruz-Uribe, OFS}
\address{Department of Mathematics, University of Alabama, Tuscaloosa, AL 35487}
\email{dcruzuribe@ua.edu}

\author[Moen]{Kabe Moen}
\address{Department of Mathematics, University of Alabama, Tuscaloosa, AL 35487}
\email{kabe.moen@ua.edu}

\author[Nguyen]{Hanh Van Nguyen}
\address{Department of Mathematics, University of Alabama, Tuscaloosa, AL 35487}
\email{hvnguyen@ua.edu}

\subjclass[2010]{42B15, 42B20, 42B30, 42B35}

\keywords{Muckenhoupt weights, weighted Hardy spaces, variable Hardy
  spaces, multilinear fractional operators, Rubio de Francia extrapolation}

\thanks{The first author is supported by research funds from the Dean
  of the College of Arts \& Sciences, the University of Alabama. The
  second author is supported by the Simons Foundation.}

\date{February 28, 2019}

\begin{abstract}
  We prove norm estimates for multilinear fractional integrals acting
  on weighted and variable Hardy spaces.  In the weighted case we
  develop ideas we used for multilinear singular
  integrals~\cite{CMN18}.  For the variable exponent case, a key
  element of our proof is a new multilinear, off-diagonal version of
  the Rubio de Francia extrapolation theorem.
\end{abstract}

\maketitle

\section{Introduction}

The purpose of this paper is to continue the study of multilinear
operators on Hardy spaces begun in~\cite{CMN18,DCU-HN}.  In those
papers we considered multilinear Calder\'on-Zygmund operators and
multipliers.  Here we consider the multilinear fractional
Calder\'on-Zygmund operators introduced by Lin and
Lu~\cite{MR2353641}.  In the linear case, fractional
Calder\'on-Zygmund operators have been studied by a number of authors.  See, for
instance, the recent papers~\cite{MR3431617,MR3470665}.

Given positive integers $m,n$ and a real
number $0<\gamma<mn$, let
$K_\gamma$ be a function defined in $\mathbb{R}^{(m+1)n}$
away from the diagonal $x=y_1=\cdots=y_m$ that satisfies
the size condition
\begin{equation}
\label{eq-001}
|K_\gamma(x,y_1,\ldots,y_m)| \lesssim \Big(|x-y_1|+\cdots+|x-y_m|\Big)^{\gamma-mn},
\end{equation}
and the smoothness condition
\begin{equation}
\label{eq-002}
\sum_{i=1}^m\sum_{|\beta|=N}|\partial^{\beta}_{i}K_\gamma(x,y_1,\ldots,y_m)| \lesssim \Big(|x-y_1|+\cdots+|x-y_m|\Big)^{\gamma-mn-N}
\end{equation}
for some  sufficiently large integer $N$. We define the multilinear
fractional Calder\'on-Zygmund operator $T_\gamma$ by
\begin{equation*}
T_{\gamma}(f_1,\ldots,f_m)(x) = \int_{\mathbb{R}^{mn}}K_{\gamma}(x,y_1,\ldots,y_m)
f_1(y_1)\cdots f_m(y_m)\; d\vec{y}.
\end{equation*}

The simplest example of such an operator is the multilinear fractional
integral introduced by Kenig and Stein~\cite{MR1682725}:
\[ I_\gamma(f_1,\ldots,f_m)(x)
  = \int_{\R^{mn}} \frac{f_1(y_1)\cdots f_m(y_m)}
  {(|x-y_1|+\cdots+|x-y_m|)^{mn-\gamma}}\,d\vec{y}.  \]
They proved that for $1<p_1,\ldots,p_m\le
\infty$ and $q$ such that $\frac{1}{q} = \frac{1}{p_1} + \cdots + \frac1{p_m}-\frac{\gamma}{n}>0$,
\[ \|I_\gamma(f_1,\ldots,f_m)\|_{L^q}
\lesssim
C\|f_1\|_{L^{p_1}}\cdots \|f_m\|_{L^{p_m}}.
\]
Moreover, if $p_i=1$ for some $i$, then the above inequality is replaced by
the corresponding weak-type estimate.

Lin and Lu~\cite{MR2353641} proved Hardy space estimates for
multilinear fractional Calder\'on-Zygmund operators, generalizing the
results of Grafakos and Kalton~\cite{MR1852036} for multilinear
singular integrals and the results in the linear case for fractional
integrals due to Str\"omberg and Wheeden~\cite{MR766221} and Gatto,
{\em et al.}~\cite{MR784004}.    More precisely, they proved that if
$0<p_1,\ldots,p_m,q\le 1$, then
\[
\|T_\gamma(f_1,\ldots,f_m)\|_{L^q}
\lesssim
C\|f_1\|_{H^{p_1}}\cdots \|f_m\|_{H^{p_m}}.
\]
However, they had to make the restrictive assumption that
$0<\gamma<n$. 

Our first theorem is a generalization of the result of Lin and Lu to
weighted Hardy spaces.  To state it, we first recall some basic
definitions from the theory of  Muckenhoupt weights.   By a weight we mean a
non-negative, locally integrable function.  Given a weight $w$ and
$1<p<\infty$, we say $w$ is in the Muckenhoupt class $A_p$, denoted by
$w\in A_p$, if for every cube $Q$,
\[ \avgint_Q w\,dx \bigg(\avgint_Q w^{1-p'}\,dx\bigg)^{p-1} \leq C <
  \infty. \]
The smallest such constant $C$ is denoted by $[w]_{A_p}$.  The $A_p$
classes are nested:  $A_p\subset A_q$ if $p<q$.  Hence we can define
$A_\infty$ as the union of all the $A_p$ classes, and  define
$r_w=\inf\{ p : w\in A_p \}$.  
For 
$s>1$, we say that $w$ satisfies a reverse H\"older inequality with
exponent $s$, denoted by $w\in RH_s$, if for every cube $Q$,
\[ \left( \avgint_Q w^s\,dx\right)^{\frac{1}{s}}
  \leq C\avgint_Q w\,dx. \]
The infimum of all the constants for which this is true is denoted by
$[w]_{RH_s}$.   A weight is in $A_p$ for some $p>1$ if and only if it
is in $RH_s$ for some $s>1$.

\begin{theorem}
\label{th-001}
Let $0<\gamma<mn$.  Given $0<p_1,\ldots,p_m<\infty$, define $0<p<\infty$ by
\begin{equation}\label{eq-004}
\frac1p=\frac{1}{p_1}+\cdots+\frac{1}{p_m}>\frac{\gamma}{n},
\end{equation}
and define $0<q<\infty$  by
\begin{equation}
\label{eq-005}
 \frac1q = \frac{1}{p} - \frac{\gamma}{n}.
\end{equation}
Suppose that $(w_1,\ldots,w_m)$ is a vector of weights satisfying
$w_i\in RH_{\frac{q}{p}}$.  If $K_\gamma$ satisfies \eqref{eq-001} and
\eqref{eq-002} for some positive integer
$N> \max\{mn(\frac{r_{w_i}}{p_i}-1),1\le i\le m \}$, then
\begin{equation}
\label{eq-006}
\|T_\gamma(f_1,\ldots,f_m)\|_{L^q(\overline{w})} \lesssim
\prod_{i=1}^m\|f_i\|_{H^{p_i}(w_i)},
\end{equation}
where
\begin{equation*}
\overline{w} = \prod_{i=1}^mw_i^{\frac{q}{p_i}}.
\end{equation*}
\end{theorem}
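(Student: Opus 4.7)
I would use the atomic-decomposition approach the authors developed in~\cite{CMN18} for multilinear Calder\'on--Zygmund operators on weighted Hardy spaces, now adapted to the fractional setting.

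\emph{Decomposition.} First, apply the atomic decomposition of $H^{p_i}(w_i)$ to write $f_i=\sum_k\lambda_{i,k}a_{i,k}$, where each $a_{i,k}$ is a weighted $(p_i,\infty,N)$-atom, i.e., $\supp a_{i,k}\subset Q_{i,k}$, $\|a_{i,k}\|_\infty\le w_i(Q_{i,k})^{-1/p_i}$, $\int a_{i,k}(x)x^\alpha\,dx=0$ for $|\alpha|\le N$, and $\sum_k|\lambda_{i,k}|^{p_i}\lesssim\|f_i\|_{H^{p_i}(w_i)}^{p_i}$. By multilinearity and the $\underline q$-quasi-triangle inequality in $L^q(\bar w)$ with $\underline q=\min(q,1)$, it suffices to prove an appropriate single-tuple estimate for $\|T_\gamma(a_1,\ldots,a_m)\|_{L^q(\bar w)}$ and then sum over $(k_1,\ldots,k_m)$.

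\emph{Single-tuple estimate.} Fix a tuple $(a_1,\ldots,a_m)$ with supports $Q_1,\ldots,Q_m$; relabel so $\ell(Q_1)\ge\cdots\ge\ell(Q_m)$, and let $Q^*$ be a large dilate of $Q_1$. Split
\[
\|T_\gamma(a_1,\ldots,a_m)\|_{L^q(\bar w)}^q = \int_{Q^*}|T_\gamma(\vec a)|^q\,\bar w\,dx + \int_{(Q^*)^c}|T_\gamma(\vec a)|^q\,\bar w\,dx.
\]
For the \emph{local} piece, apply H\"older's inequality with an exponent $\sigma>1$ chosen so that $q\sigma$ lies in the Kenig--Stein unweighted range of $T_\gamma$ (so there exist $s_i>1$ with $\tfrac{1}{q\sigma}=\sum_i\tfrac{1}{s_i}-\tfrac{\gamma}{n}$); the unweighted bound, combined with $\|a_i\|_{L^{s_i}}\le w_i(Q_i)^{-1/p_i}|Q_i|^{1/s_i}$, controls the $T_\gamma$-factor, while $\bigl(\int_{Q^*}\bar w^{\sigma'}\bigr)^{1/\sigma'}$ is handled by the H\"older decomposition $\bar w^{\sigma'}=\prod_i w_i^{\sigma' q/p_i}$ together with the reverse H\"older inequality $w_i\in RH_{q/p}$ applied to each factor. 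For the \emph{global} piece, Taylor expand $K_\gamma$ in each $y_i$ about $c(Q_i)$ to order $N$: the cancellation $\int a_i(y)y^\alpha\,dy=0$ combined with the smoothness estimate~\eqref{eq-002} yields decay of order $(\ell(Q_i)/|x-c(Q_i)|)^{N+1}$, and summing against $\bar w$ over dyadic annuli around $Q^*$ converges precisely because the hypothesis $N>mn(r_{w_i}/p_i-1)$ is calibrated to beat the $A_\infty$-growth rate $w_i(2^kQ^*)\lesssim 2^{kn\,r_{w_i}}w_i(Q^*)$ through each factor of $\bar w$.

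\emph{Main obstacle.} The crux is closing up the local piece: one must choose the auxiliary exponent $\sigma$ so that \emph{simultaneously} $q\sigma$ sits in the unweighted multilinear fractional boundedness range and the exponent $\sigma' q/p_i$ appearing in $\bar w^{\sigma'}$ is compatible with the reverse H\"older exponent $q/p$, so that the H\"older reassembly produces exactly the right power of $w_i(Q_i)$. The single hypothesis $w_i\in RH_{q/p}$ is the natural condition that makes this matching possible; this exponent bookkeeping was developed in \cite{CMN18} for the singular-integral case, and the principal novelty here is that the fractional shift $\gamma/n$ moves the admissible range of $\sigma$ and must be tracked carefully. Once the single-tuple bound is in hand, summing over $(k_1,\ldots,k_m)$ using the coefficient estimates from the atomic decomposition yields~\eqref{eq-006}.
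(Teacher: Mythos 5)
Your plan shares one genuine overlap with the paper (atomic decomposition plus Taylor expansion of the kernel using the vanishing moments), but the two central mechanisms you propose --- the $\underline{q}$-quasi-triangle inequality reduction to a single-tuple estimate, and the H\"older/Kenig--Stein handling of the local piece --- are not what the paper does, and the summation step you defer to the end is precisely where the argument breaks.

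\textbf{The gap in the summation.} You write that by the $\underline{q}$-quasi-triangle inequality it suffices to prove a single-tuple estimate for $\|T_\gamma(a_1,\dots,a_m)\|_{L^q(\bar w)}$ and then ``sum over $(k_1,\dots,k_m)$ using the coefficient estimates.'' This is exactly the step that does not go through in general. In the fractional setting $q$ need not be $\leq 1$, and even when $q\leq 1$ one can have $q>p_i$ for some $i$ (e.g.\ $m=2$, $p_1=p_2=1$, $\gamma/n>1$ gives $q>1=p_i$). With the classical weighted atomic normalization you adopt, the coefficient bound $\sum_k|\lambda_{i,k}|^{p_i}\lesssim\|f_i\|_{H^{p_i}(w_i)}^{p_i}$ controls $\ell^{p_i}$-sums, not the $\ell^{\underline q}$-sums (nor the weighted $L^q$-sums of overlapping cubes) that the triangle-inequality reduction forces you to bound. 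This is a well-known obstruction in weighted multilinear Hardy-space estimates: without a vector-valued mechanism, the ``sum the single-tuple bounds'' step requires the cubes to be essentially disjoint or the exponents to sit in a restrictive range, neither of which is assumed here.

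\textbf{What the paper does instead.} The paper sidesteps the single-tuple estimate entirely. It uses a different atomic decomposition, from~\cite{CMN18}: atoms with $|a_{i,k_i}|\le\chi_{Q_{i,k_i}}$ and the \emph{vector} coefficient estimate
\[
\Big\|\sum_{k_i}\lambda_{i,k_i}\chi_{Q_{i,k_i}}\Big\|_{L^{p_i}(w_i)}\lesssim\|f_i\|_{H^{p_i}(w_i)},
\]
not the scalar $\ell^{p_i}$ coefficient bound. It then splits $T_\gamma(f_1,\dots,f_m)=G_1+G_2$ according to whether $x\in E_{\vec k}=\bigcap_i Q_{i,k_i}^*$ or its complement, obtains pointwise bounds that \emph{factor} over $i$ (using a split $\gamma=\sum\gamma_i$ with $\gamma_i/n=1/p_i-1/q_i$), and then applies H\"older in the $i$-variable to reduce to $m$ separate one-variable vector-valued estimates. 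Those are proved once and for all in Lemmas~\ref{lm-022} and~\ref{lm-023} via off-diagonal extrapolation for the fractional maximal operator (and the Fefferman--Stein inequality for the tail term). No quasi-triangle inequality is invoked; the whole point is that the sum over $\vec k$ is a product, so it factors exactly and the overlaps among cubes are absorbed by the vector-valued maximal inequalities. Relatedly, for the local piece the paper does not use a Kenig--Stein unweighted bound plus reverse H\"older; it integrates the size condition~\eqref{eq-001} directly to get the pointwise estimate $\prod_i\ell(Q_{i,k_i})^{\gamma_i}\chi_{Q_{i,k_i}^*}(x)$, which is what feeds into Lemma~\ref{lm-022}.

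\textbf{Smaller issues.} Your relabeling $\ell(Q_1)\ge\cdots\ge\ell(Q_m)$ and the fixed ``large dilate $Q^*$ of $Q_1$'' echo the Grafakos--Kalton/Lin--Lu unweighted scheme, but in the paper the far-field set $F_{\vec k}=\mathbb{R}^n\setminus\bigcap_i Q_{i,k_i}^*$ is handled by choosing, for each $x$, the smallest cube among those $Q_{i,k_i}$ with $x\notin Q_{i,k_i}^*$; this per-point choice is essential for the factorized pointwise bound~\eqref{eq-0B0310}. Also, the hypothesis $N>\max_i mn(r_{w_i}/p_i-1)$ enters the paper not through dyadic-annuli growth of $w_i(2^kQ^*)$ but through the requirement $\epsilon_N=n+N/m>nr_i/p_i$ needed to apply the Fefferman--Stein inequality inside Lemma~\ref{lm-023}; this is a cleaner way to see where the threshold for $N$ comes from.

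In short: the Taylor-expansion idea is right, but replacing the single-tuple/$\underline q$-triangle framework with the $E_{\vec k}/F_{\vec k}$ split, the $\gamma=\sum\gamma_i$ factorization, and the vector-valued Lemmas~\ref{lm-022}--\ref{lm-023} is not a cosmetic change --- it is what makes the weighted sum converge.
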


\begin{remark}
  Even in the unweighted case Theorem~\ref{th-001} is a more general
  result than that of Lin and Lu, since we extend the values of
  $\gamma$ to the full range $0<\gamma<mn$.
\end{remark}

Below, we will prove Theorem~\ref{th-001} as a special case of a more
general result, Theorem \ref{thm:genver}.   It is more complicated to
state, since it requires
the existence of certain $q_i>p_i$ such that $w_i\in
RH_{\frac{q_i}{p_i}}$.  However, this result has the advantage that it
respects the product structure in the multilinear setting, in that we
do not have to assume an identical condition on each weight $w_i$.
This phenomenon does not appear in the diagonal case for multilinear
singular integrals considered in~\cite{CMN18}, but it does play a role
in the conditions for multilinear multipliers given in~\cite{DCU-HN}. 

\medskip

{As an application of our weighted estimates we extend our
results to the variable exponent setting}. Variable exponent spaces are
generalizations of the classical $L^p$ and $H^p$ spaces where the
constant exponent $p$ is replaced by an exponent function
$\pp$. Intuitively, $L^\pp$ consists of all functions such that
\[ \int_{\R^n} |f(x)|^{p(x)}\,dx < \infty.  \]
Harmonic analysis has been extensively studied on these spaces:
see~\cite{cruz-fiorenza-book} for the history and detailed
references.  The theory of variable exponent Hardy spaces $H^\pp$ was
introduced in~\cite{DCU-dw-P2014}. Our second main result is Theorem
\ref{th-002}.  For brevity, we defer some technical definitions to
Section~\ref{section:hardy}.

\begin{theorem}
\label{th-002}
Given $0<\gamma<mn$, let $p_i(\cdot)\in \Pp_0$, $1\leq i \leq m$, be
log-H\"older continuous exponent functions such that
$0<[p_i(\cdot)]_-\leq [p_i(\cdot)]_+ <\infty$ and
$$\frac{1}{[p_1(\cdot)]_+}+\cdots+  \frac{1}{[p_m(\cdot)]_+}>\frac{\gamma}{n}.$$
Define $\qq$ by 
$$
\frac{1}{q(\cdot)} = \frac{1}{p_1(\cdot)}+\cdots+\frac{1}{p_m(\cdot)} -\frac{\gamma}{n};
$$
then
\[
\|T_\gamma(f_1,\ldots,f_m)\|_{L^{q(\cdot)}}
\lesssim
 \prod_{i=1}^m \|f_i\|_{H^{p_i(\cdot)}}.
\]
\end{theorem}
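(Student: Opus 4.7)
The approach is to derive Theorem~\ref{th-002} from the weighted version (more precisely from the general form Theorem~\ref{thm:genver}) via a new multilinear, off-diagonal Rubio de Francia extrapolation adapted to variable-exponent Hardy spaces. The intuition is that Theorem~\ref{th-001} provides weighted $L^q(\bar w)$ bounds for a sufficiently rich family of weight vectors $(w_1,\ldots,w_m)$ that the variable exponent norm $\|\cdot\|_{L^{q(\cdot)}}$ on the left, and each Hardy norm $\|\cdot\|_{H^{p_i(\cdot)}}$ on the right, can be captured by integrating against suitable weights.

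The first step is to formulate and prove an abstract extrapolation statement of the following shape: if $T$ is $m$-sublinear and
\[
\|T(\vec f)\|_{L^q(\bar w)}\lesssim \prod_{i=1}^{m}\|f_i\|_{H^{p_i}(w_i)}
\]
holds for every admissible weight vector with $w_i\in RH_{q/p_i}$ and $\tfrac{1}{q}=\sum_i\tfrac{1}{p_i}-\tfrac{\gamma}{n}$, then for log-H\"older exponents $p_i(\cdot)$ satisfying the analogous pointwise scaling relation with $q(\cdot)$ one has
\[
\|T(\vec f)\|_{L^{q(\cdot)}}\lesssim\prod_{i=1}^{m}\|f_i\|_{H^{p_i(\cdot)}}.
\]
This is a multilinear, off-diagonal analog of the scalar variable-exponent extrapolation developed in~\cite{cruz-fiorenza-book} and of the diagonal Hardy-space extrapolation used in~\cite{DCU-HN}.

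The second step is to prove this extrapolation by combining duality with atomic decomposition. On the target side, after choosing $0<s<\essinf q(\cdot)$ I would dualize via $L^{(q(\cdot)/s)'}$ and run a Rubio de Francia iteration built from the maximal operator on that space (whose boundedness comes from log-H\"older regularity of $q(\cdot)$) to produce an $A_1$ weight $W$ dominating the dual function and satisfying $\|W\|_{L^{(q(\cdot)/s)'}}\lesssim 1$. On the input side, I would invoke the finite atomic decomposition of $H^{p_i(\cdot)}$ from~\cite{DCU-dw-P2014}; atom by atom one can choose a weight $w_i$ concentrated on the atom's supporting cube, lying in a suitable $A_{r_i}\cap RH_{q_i/p_i}$ class with quantitative norm bounds. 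Applying the weighted estimate of Theorem~\ref{thm:genver} for each atomic configuration, summing, and verifying that the resulting product $\bar w = \prod w_i^{q/p_i}$ is pointwise comparable to $W$ yields the desired bound.

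The main obstacle is the simultaneous construction of the weights. A single Rubio de Francia iteration on the output side produces only one $A_1$ weight $W$, but on the input side one must factor it into $m$ distinct weights $w_i$, each meeting the specific reverse H\"older and $A_p$ hypothesis of the general weighted theorem, with exponents $q_i/p_i$ that in general are not equal across $i$. This is precisely why Theorem~\ref{thm:genver}, which permits asymmetric reverse H\"older hypotheses on the weights, is needed rather than the symmetric Theorem~\ref{th-001}. A secondary subtlety is matching the smoothness parameter $N$ in~\eqref{eq-002} to the quantitative $A_{r_{w_i}}$ exponents generated by the iteration; log-H\"older regularity of the exponent functions together with the finite-index bounds produced by the algorithm make this manageable. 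Once these pieces are in place, Theorem~\ref{th-002} follows immediately by applying the extrapolation theorem to $T_\gamma$.
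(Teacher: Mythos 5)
Your overall strategy—derive Theorem~\ref{th-002} from the weighted Theorem~\ref{thm:genver} via a multilinear off-diagonal extrapolation theorem, using the asymmetric reverse H\"older hypotheses to make the factorization possible—is the same route the paper takes, and you correctly identify the central obstacle: a single output-side duality function has to be split into $m$ weights $w_i$ landing in $m$ different $RH_{q_i/p_i}$ classes. However, your proposed resolution of that obstacle is not correct, and it is exactly where the paper's proof has its main new idea. You propose running \emph{one} Rubio de Francia iteration in $L^{(q(\cdot)/s)'}$ to produce a single $A_1$ weight $W$, and then ``factoring'' $W$ into pieces, or alternatively choosing weights $w_i$ ``concentrated on the atom's supporting cube.'' Neither works: an $A_1$ weight $W$ does not in general factor as $\prod_i w_i^{q/p_i}$ with each $w_i \in RH_{q_i/p_i}$ and controlled norms, and the weights in Theorem~\ref{thm:genver} must be fixed \emph{before} the atomic decomposition is invoked—you cannot choose a different weight per atom and then sum. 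The paper instead first writes the dual function as a pointwise product $h = \prod_i h^{\theta_i(\cdot)}$, where the auxiliary exponents $\theta_i(\cdot)$ are defined precisely so that $\sum_i \theta_i(\cdot) = 1$, and then runs $m$ \emph{separate} Rubio de Francia iterations $\Rdf_i$, each in a space $L^{\sigma_i(\cdot)}$ whose exponent is built from $p_i(\cdot)$ and $q_i$. Each iterate is then raised to the power $p_i/q_i$ to land in $A_1 \cap RH_{q_i/p_i}$ via the rescaling property of~\cite{cruz-uribe-neugebauer95}. This pre-factorization of $h$ followed by $m$ per-index iterations is what your sketch is missing.

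Two secondary issues. First, the extrapolation theorem (Theorem~\ref{thm:xtpl}) is formulated purely in the Lebesgue scale, $\|F\|_{L^q(\overline{w})} \lesssim \prod \|f_i\|_{L^{p_i}(w_i)}$, not the Hardy scale; the passage from Hardy norms to Lebesgue norms is made by taking the extrapolation tuples to have inputs $f_i = \mathcal{M}_{N_0} g_i$, so that $\|f_i\|_{L^{p_i}(w_i)} = \|g_i\|_{H^{p_i}(w_i)}$ and $\|f_i\|_{L^{p_i(\cdot)}} = \|g_i\|_{H^{p_i(\cdot)}}$. Your proposal blurs this and speaks as if the extrapolation theorem directly transfers Hardy-to-Hardy. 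Second, you omit the finiteness argument: the extrapolation theorem requires $\|F\|_{L^{q(\cdot)}} < \infty$ as a hypothesis, so the paper works with truncations $F_R = \min(|T_\gamma(\vec g)|, R)\chi_{B(0,R)}$, verifies by a direct H\"older computation that $\|F_R\|_{L^q(\overline{w})} < \infty$ for the relevant weights, and removes the truncation afterward by Fatou's lemma in $L^{q(\cdot)}$. Without this step the application of the extrapolation theorem is not justified.
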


The key tool in the proof Theorem~\ref{th-002} is a multilinear,
off-diagonal version of Rubio de Francia extrapolation,
Theorem~\ref{thm:xtpl}, which is of interest in its own right.  This
result generalizes earlier multilinear extrapolation theorems into the
scale of variable exponent spaces~\cite{CMN18,CN16} and also the
multilinear extrapolation theory in~\cite{CruzUribe:2018hg}.

\medskip

The remainder of this paper is organized as follows.  In
Section~\ref{section:prelim} we gather some definitions and
preliminary results needed in the proof of Theorem~\ref{th-001}.  In
Section~\ref{section:proof} we prove Theorem~\ref{th-001}.  Our proof
draws upon ideas from~\cite{CMN18,DCU-HN}, but significant
modifications were required to handle the fractional nature of the
kernel.  Finally, in Section~\ref{section:hardy} we give the necessary
definitions and prove Theorems~\ref{thm:xtpl} and~\ref{th-002}.

Throughout this paper, we will use $n$ to denote the dimension of the
underlying space, $\R^n$, and will use $m$ to denote the ``dimension''
of our multilinear operators.  By a cube $Q$ we will always mean a cube
whose sides are parallel to the coordinate axes, and for $\tau>1$ let
$\tau Q$ denote the cube with same center such that $\ell(\tau Q)=\tau
\ell(Q)$.  In particular, let $Q^*=2\sqrt{n}Q$ and $Q^{**}=(Q^*)^*$.  
By $C$, $c$, etc. we will mean
constants that may depend on the underlying parameters in the
problem.  The values of these
constants may change from line to line.  If we write $A\lesssim B$, we
mean that $A\leq cB$ for some constant~$c$.

\section{Preliminary results}
\label{section:prelim}

For $0\leq \gamma<n$, the fractional maximal function $M_\gamma$ is defined by
\begin{equation*}
M_\gamma f(x) = \sup_{Q}\ell(Q)^\gamma \Big( \avgint_Q|f(y)|dy \Big)\chi_{Q}(x).
\end{equation*}
When $\gamma=0$ we get the Hardy-Littlewood maximal operator and write
$Mf$ instead of $M_0f$.

For $1<p<\infty$ and $1<r<\infty$, given $w\in A_p$ we have the
Fefferman-Stein inequality:
\begin{equation}
\label{eqn:fef-stein}
\Big\|\Big(\sum_{k}M(f_k)^r\Big)^{\frac1r}\Big\|_{L^p(w)}
\lesssim
\Big\|\Big(\sum_{k}|f_k|^r\Big)^{\frac1r}\Big\|_{L^p(w)}
\end{equation}
A similar result holds for $M_\gamma$.
Given $0<\gamma<n,$ $1<p<\frac{n}{\gamma}$ and $\frac{1}{q} =
\frac1p-\frac{\gamma}{n}$, we say $w\in A_{p,q}$ if for all cubes $Q$,
\[ \bigg(\avgint_Q w^q\,dx\bigg)^{\frac{1}{q}}
  \bigg(\avgint_Q w^{-p'}\,dx\bigg)^{\frac{1}{p'}} \leq C <
  \infty. \]
Muckenhoupt and Wheeden \cite{MR0340523} showed that if  $w\in
A_{p,q}$,
then  
\begin{equation*}
\|M_\gamma f\|_{L^q(w^q)}
\lesssim
\|f\|_{L^p(w^p)}.
\end{equation*}
As a consequence of the off-diagonal Rubio de Francia extrapolation
\cite[Theorem~3.23]{MR2797562}, we have that for  $1<r<\infty$ and
$w\in A_{p,q}$,
\begin{equation}
\label{eq-011}
\Big\|\Big(\sum_{k}M_\gamma(f_k)^r\Big)^{\frac1r}\Big\|_{L^q(w^q)}
\lesssim
\Big\|\Big(\sum_{k}|f_k|^r\Big)^{\frac1r}\Big\|_{L^p(w^p)}.
\end{equation}

For $\gamma>0$, we have that
\begin{equation}
\label{eq-008}
\ell(Q)^{\gamma}\chi_{Q^*} \lesssim M_{\gamma\delta}(\chi_Q)^{\frac{1}{\delta}}
\end{equation}
for all $0<\delta\le 1$.  If we combine this estimate
with~\eqref{eq-011} we get the following vector-valued estimate.

\begin{lemma}
\label{lm-022}
Given $0<\gamma<\infty$ and $0<p<\frac{n}{\gamma}$, define $q>0$ by
$\frac1q = \frac1p-\frac{\gamma}n$.  Then for any $w\in RH_{\frac{q}{p}}$,
\[
\Big\|\sum_{j}\lambda_j\ell(Q_j)^{\gamma}\chi_{Q_j^*}\Big\|_{L^q(w^{\frac{q}{p}})}
\lesssim
\big\|\sum_{j}\lambda_j\chi_{Q_j}\big\|_{L^p(w)},
\]
where $\lambda_j>0$ and $\{Q_j\}_j$ is any sequence of cubes. 
\end{lemma}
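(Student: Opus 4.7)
The plan is to combine the pointwise majorization \eqref{eq-008} with the vector-valued extrapolation inequality \eqref{eq-011} after a rescaling trick.  Fix a small parameter $\delta\in(0,1)$ to be chosen at the end, and set $r=1/\delta$, $p_0=p/\delta$, $q_0=q/\delta$, and $u=w^{\delta/p}$.  Note that $\tfrac{1}{q_0}=\tfrac{1}{p_0}-\tfrac{\gamma\delta}{n}$, $u^{q_0}=w^{q/p}$, and $u^{p_0}=w$.

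First I would apply \eqref{eq-008} with exponent $\delta$ and use the positive homogeneity of $M_{\gamma\delta}$ to bound the sum pointwise by
\[
\sum_j \lambda_j \ell(Q_j)^{\gamma}\chi_{Q_j^*}(x)
\lesssim
\sum_j \lambda_j M_{\gamma\delta}(\chi_{Q_j})(x)^{1/\delta}
=
\sum_j M_{\gamma\delta}\bigl(\lambda_j^{\delta}\chi_{Q_j}\bigr)(x)^{1/\delta}.
\]
Next, I would invoke \eqref{eq-011} with fractional parameter $\gamma\delta$, exponents $p_0,q_0$, weight $u$, vector exponent $r=1/\delta$, and the sequence $f_k=\lambda_k^{\delta}\chi_{Q_k}$.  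After simplifying the two $\ell^{r}$ norms using $\chi_{Q_k}^{1/\delta}=\chi_{Q_k}$ and the homogeneity of $M_{\gamma\delta}$, the inequality reads
\[
\Bigl\|\Bigl(\sum_k \lambda_k M_{\gamma\delta}(\chi_{Q_k})^{1/\delta}\Bigr)^{\delta}\Bigr\|_{L^{q_0}(w^{q/p})}
\lesssim
\Bigl\|\Bigl(\sum_k \lambda_k \chi_{Q_k}\Bigr)^{\delta}\Bigr\|_{L^{p_0}(w)}.
\]
Using the identity $\|F^{\delta}\|_{L^{q_0}(w^{q/p})}=\|F\|_{L^{q}(w^{q/p})}^{\delta}$ (and its analogue on the right), raising to the power $1/\delta$, and combining with the pointwise estimate gives the desired bound.

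The main obstacle is verifying the weight condition $u\in A_{p_0,q_0}$, which is equivalent to $u^{q_0}=w^{q/p}\in A_{1+q_0/p_0'}=A_{1+q/\delta-q/p}$.  The hypothesis $w\in RH_{q/p}$ places $w$ in $A_\infty$ (by the characterization quoted in the introduction), and a standard Coifman--Rochberg type computation, combining the $A_p$ and $RH_{q/p}$ conditions on $w$, yields $w^{q/p}\in A_\infty$ as well.  Hence $w^{q/p}\in A_{r_0}$ for some finite $r_0$.  Since $1+q/\delta-q/p\to\infty$ as $\delta\to 0^{+}$, choosing $\delta$ small enough (and also $\delta<\min(1,p)$ to ensure $r>1$ and $p_0>1$, together with $p<n/\gamma$ which is given) makes all the hypotheses of \eqref{eq-011} hold, and the proof closes.
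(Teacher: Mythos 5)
Your proposal is correct and follows essentially the same route as the paper's proof: rescale by $\delta$, use the pointwise bound \eqref{eq-008} to majorize by the fractional maximal function, invoke the vector-valued extrapolation inequality \eqref{eq-011} at exponents $p/\delta,\ q/\delta$ with weight $w^{\delta/p}$, and verify $w^{\delta/p}\in A_{p/\delta,\,q/\delta}$ by observing that $w\in RH_{q/p}$ forces $w^{q/p}\in A_\infty$, so taking $\delta$ small enough puts $w^{q/p}$ in $A_{1+q/\delta-q/p}$. Your explanation of the weight step is slightly more verbose than the paper's one-line assertion, but the substance and all intermediate computations match.
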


\begin{remark} Lemma \ref{lm-022} was first proved in \cite{MR766221}
  when $1<p<n/\gamma$ in a two weight setting.  Our proof is much
  simpler.  For another proof that also uses extrapolation but
  avoids the vector-valued inequality see~\cite[Lemma 4.9]{CMN19}.
\end{remark}

\begin{proof}
  For each $\delta \in (0,p)$,  set $q_\delta = q/\delta$,
  $p_\delta = p/\delta$ and $u_\delta = w^{\frac1{p_\delta}}$. Since
  $w\in RH_{\frac{q}{p}}$, there exists $\delta>0$
sufficiently small so that
\[
u_\delta^{q_\delta}  = w^{\frac qp} \in A_{1+\frac{q_\delta}{(p_{\delta})'}}.
\]
Therefore, it follows from the definitions that
$u_\delta\in A_{p_\delta,q_\delta}$.   Then we can apply inequalities
\eqref{eq-008} and \eqref{eq-011} to get that
\begin{align*}
\Big\|\sum_{j}\lambda_j\ell(Q_j)^{\gamma}\chi_{Q_j^*}\Big\|_{L^q(w^{\frac{q}{p}})}
\lesssim&
\Big\|\sum_{j}\lambda_jM_{\gamma\delta}(\chi_{Q_j})^{\frac{1}{\delta}}
\Big\|_{L^q(w^{\frac{q}{p}})}\\
=&
\Big\|\Big(\sum_{j}\lambda_jM_{\gamma\delta}(\chi_{Q_j})^{\frac{1}{\delta}}\Big)^\delta
\Big\|_{L^{q_\delta}(w^{\frac{q_\delta}{p_\delta}})}^{\frac1{\delta}}\\
=&
\Big\|\Big(\sum_{j}\lambda_jM_{\gamma\delta}(\chi_{Q_j})^{\frac{1}{\delta}}\Big)^\delta
\Big\|_{L^{q_\delta}(u_\delta^{q_\delta})}^{\frac1{\delta}}\\
\lesssim&
\Big\|\Big(\sum_{j}\lambda_j\chi_{Q_j}\Big)^\delta
\Big\|_{L^{p_\delta}(u_\delta^{p_\delta})}^{\frac1{\delta}}\\
=&
\Big\|\sum_{j}\lambda_j\chi_{Q_j}
\Big\|_{L^{p}(w)}.
\end{align*}
\end{proof}

\begin{lemma}
\label{lm-023}
Let $\gamma, p$, and $q$ be real numbers as in Lemma \ref{lm-022} and
suppose $w\in RH_{\frac{q}{p}}\cap A_r$ for some $r>1$.  Then for
$\epsilon > \max(\frac{nr}{p},n)$ and any sequence $\{Q_j\}_j$ of
cubes,
\begin{equation*}
\Big\|
\sum_{j}\lambda_j
\frac{\ell(Q_j)^{\epsilon} \chi_{(Q_j^*)^c} }
{|\cdot - c_j|^{\epsilon-\gamma}}
\Big\|_{L^q(w^{\frac qp})}
\lesssim
\Big\|
\sum_j\lambda_j\chi_{Q_j}
\Big\|_{L^p(w)},
\end{equation*}
where $\lambda_j>0$ and $c_j$ is the center of the cube $Q_j$.
\end{lemma}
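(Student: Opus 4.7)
The plan is to decompose the region $(Q_j^*)^c$ dyadically and apply Lemma~\ref{lm-022} to enlarged cubes. For $x\notin Q_j^*$ and $k\ge 0$, on the shell $2^{k+1}Q_j^*\setminus 2^kQ_j^*$ one has $|x-c_j|\sim 2^k\ell(Q_j)$, which yields the pointwise bound
\[
\frac{\ell(Q_j)^{\epsilon}\chi_{(Q_j^*)^c}(x)}{|x-c_j|^{\epsilon-\gamma}}
\lesssim \sum_{k=0}^{\infty}2^{-k(\epsilon-\gamma)}\ell(Q_j)^{\gamma}\chi_{2^{k+1}Q_j^*}(x).
\]
After summing in $j$ and taking $L^q(w^{q/p})$ (quasi-)norms (by Minkowski when $q\ge 1$ and by $(\sum a_k)^q\le \sum a_k^q$ when $q<1$), I would use the identity $\ell(Q_j)^\gamma=2^{-(k+1)\gamma}\ell(2^{k+1}Q_j)^\gamma$ together with $2^{k+1}Q_j^*=(2^{k+1}Q_j)^*$ and apply Lemma~\ref{lm-022} to the dilated family $\{2^{k+1}Q_j\}_j$ to obtain, for each $k$,
\[
\Big\|\sum_j\lambda_j\ell(Q_j)^{\gamma}\chi_{2^{k+1}Q_j^*}\Big\|_{L^q(w^{q/p})}
\lesssim 2^{-k\gamma}\Big\|\sum_j\lambda_j\chi_{2^{k+1}Q_j}\Big\|_{L^p(w)}.
\]

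The heart of the argument is then the auxiliary inequality
\[
\Big\|\sum_j\lambda_j\chi_{2^{k}Q_j}\Big\|_{L^p(w)}
\lesssim 2^{kn/s}\Big\|\sum_j\lambda_j\chi_{Q_j}\Big\|_{L^p(w)},
\]
valid for any $s\in(0,\min(1,p/r))$. Starting from the pointwise bound $\chi_{2^{k}Q_j}\le 2^{kn/s}(M\chi_{Q_j})^{1/s}$, which follows from $M\chi_{Q_j}\ge 2^{-kn}$ on $2^{k}Q_j$, it suffices to show that $\|\sum_j\lambda_j(M\chi_{Q_j})^{1/s}\|_{L^p(w)}\lesssim\|\sum_j\lambda_j\chi_{Q_j}\|_{L^p(w)}$. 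Setting $f_j=\lambda_j^{s}\chi_{Q_j}$ so that $(Mf_j)^{1/s}=\lambda_j(M\chi_{Q_j})^{1/s}$ and $f_j^{1/s}=\lambda_j\chi_{Q_j}$, this is exactly the Fefferman--Stein vector-valued inequality \eqref{eqn:fef-stein} applied with outer exponent $p/s$ and inner exponent $1/s$. Both exponents exceed $1$ because $s<\min(1,p/r)$, and $w\in A_{p/s}$ because $w\in A_r$ and $s<p/r$, so \eqref{eqn:fef-stein} applies.

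Combining these steps, the $k$th annulus contributes at most $2^{-k(\epsilon-n/s)}\|\sum_j\lambda_j\chi_{Q_j}\|_{L^p(w)}$ up to a constant, so the series in $k$ converges whenever $\epsilon>n/s$. Since $s$ is free in $(0,\min(1,p/r))$, we can make $n/s$ as close as we wish to $n/\min(1,p/r)=\max(n,nr/p)$ while keeping $n/s<\epsilon$, so the hypothesis $\epsilon>\max(nr/p,n)$ is exactly what is required. The main obstacle is the auxiliary estimate of the second paragraph: the two separate constraints $1/s>1$ and $p/s>r$ forced by the Fefferman--Stein hypotheses are precisely what produce the two-sided lower bound on $\epsilon$ in the statement of the lemma.
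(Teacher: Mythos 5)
Your proof is correct, and it runs on the same two engines as the paper's: Lemma~\ref{lm-022} and the Fefferman--Stein inequality~\eqref{eqn:fef-stein}. The difference is in the bookkeeping. You decouple the dyadic scales: on each annulus you apply Lemma~\ref{lm-022} to the dilated family $\{2^{k+1}Q_j\}_j$, then invoke an auxiliary dilation estimate $\|\sum_j\lambda_j\chi_{2^kQ_j}\|_{L^p(w)}\lesssim 2^{kn/s}\|\sum_j\lambda_j\chi_{Q_j}\|_{L^p(w)}$ (itself a consequence of $\chi_{2^kQ}\le 2^{kn/s}(M\chi_Q)^{1/s}$ plus Fefferman--Stein with inner exponent $1/s>1$ and outer exponent $p/s>r$), and close with a geometric series in $k$. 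The paper keeps the scales together: it decomposes $(Q_j^*)^c$ into pairwise disjoint cubes $R_j^{kl}$, applies Lemma~\ref{lm-022} once to the whole collection, and then observes that the resulting $L^p(w)$ expression is comparable to $\sum_j\lambda_j\ell(Q_j)^\epsilon|\cdot-c_j|^{-\epsilon}\chi_{(Q_j^*)^c}$, which is controlled pointwise by $\sum_j\lambda_j M(\chi_{Q_j})^{\epsilon/n}$ and dispatched by a single application of Fefferman--Stein with inner exponent $\epsilon/n>1$ and outer exponent $\epsilon p/n>r$. This sidesteps both the auxiliary dilation lemma and the explicit geometric sum, but the constraints $\epsilon/n>1$ and $\epsilon p/n>r$ are identical to the ones you extract by letting $s\uparrow\min(1,p/r)$, so your diagnosis of where $\max(nr/p,n)$ comes from is exactly right. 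Your version is a little longer but arguably makes the role of the two Fefferman--Stein constraints more visible.
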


\begin{proof}
For each $j$, we first decompose $\mathbb{R}^n\setminus Q_{j}^*$ into
annuli and then into 
non-overlapping cubes $R_{j}^{kl}$ such that
\begin{equation*}
\label{eq-2011}
\mathbb{R}^n\setminus Q_{j}^* = 
\bigcup_{l=0}^{\infty}\bigcup_{k=1}^{3^n-1}R_{j}^{kl}
\end{equation*}
and such that
$|x-c_{j}|\approx 3^l\ell(Q_{j})\approx
\ell(R_{j}^{kl})$ for all $x\in R_{j}^{kl}$ and all
$1\le h\le 3^n-1$.  Consequently,  for any fixed 
$s>0$,
\begin{equation}
\label{eq-2012}
|x-c_{j}|^{-s}\chi_{(Q_{j}^*)^c}(x) \approx 
\sum_{l=0}^{\infty}
\sum_{k=1}^{3^n-1}
\big(3^l\ell(Q_{j})\big)^{-s}\chi_{R_{j}^{kl}}(x).
\end{equation}
Then by Lemma \ref{lm-022} and the equivalence \eqref{eq-2012} we have that
\begin{align*}
\Big\|
\sum_{j}\lambda_j
\frac{\ell(Q_j)^{\epsilon} \chi_{(Q_j^*)^c} }
{|\cdot - c_j|^{\epsilon-\gamma}}
\Big\|_{L^q(w^{\frac qp})}
\lesssim&
\Big\|
\sum_{j}
\sum_{l=0}^{\infty}
\sum_{k=1}^{3^n-1}
\lambda_j3^{-\epsilon l}
\ell(R_{j}^{kl})^{\gamma}
\chi_{R_{j}^{kl}}
\Big\|_{L^q(w^{\frac qp})}\\
\lesssim&
\Big\|
\sum_{j}
\sum_{l=0}^{\infty}
\sum_{k=1}^{3^n-1}
\lambda_j3^{-\epsilon l}
\chi_{R_{j}^{kl}}
\Big\|_{L^p(w)}\\
=&
\Big\|
\sum_{j}\lambda_j\ell(Q_j)^{\epsilon}
\sum_{l=0}^{\infty}
\sum_{k=1}^{3^n-1}
(3^{l}\ell(Q_j))^{-\epsilon}
\chi_{R_{j}^{kl}}
\Big\|_{L^p(w)}\\
\lesssim&
\Big\|
\sum_{j}\lambda_j\ell(Q_j)^{\epsilon}
|\cdot-c_{j}|^{-\epsilon}\chi_{(Q_{j}^*)^c}
\Big\|_{L^p(w)}\\
\lesssim&
\Big\|
\sum_{j}\lambda_j
M(\chi_{Q_j})^{\frac{\epsilon}{n}}
\Big\|_{L^p(w)}\\
\lesssim&
\Big\|\Big(
\sum_{j}\lambda_j
M(\chi_{Q_j})^{\frac{\epsilon}{n}}
\Big)^{\frac{n}{\epsilon}}
\Big\|_{L^{\frac{\epsilon p}{n}}(w)}^{\frac{\epsilon}n}\\
\lesssim&
\Big\|
\sum_{j}\lambda_j
\chi_{Q_j}
\Big\|_{L^p(w)}.
\end{align*}
The last inequality holds because by our choice of $\epsilon$,
$\frac{\epsilon p}{n}>r$, so $w\in A_{\frac{\epsilon p}{n}}$ and we
can apply inequality~\eqref{eqn:fef-stein}.
\end{proof}

\section{The proof of Theorem~\ref{th-001}}
\label{section:proof}

In this section, we prove Theorem~\ref{th-001}, and 
as we said in the introduction, we  actually prove a more general result.  

\begin{theorem}
\label{thm:genver}
Given $0<\gamma<mn$ and $0<p_1,\ldots,p_m<\infty$ , define
$p$as in \eqref{eq-004} and $q$ as in \eqref{eq-005}.
Suppose that $q_i$ are such that $p_i<q_i<\infty$,
$1\le i\le m$, and $\frac1{q_1}+\cdots+\frac1{q_m} = \frac1{q}$, and 
$(w_1,\ldots,w_m)$ are weights  such that $w_i\in
RH_{\frac{q_i}{p_i}}$.  If
$K_\gamma$ satisfies \eqref{eq-001} and \eqref{eq-002} for some
positive integer $N> \max\{mn(\frac{r_{w_i}}{p_i}-1),1\le i\le m \}$,
then
\begin{equation*}
\|T_\gamma(f_1,\ldots,f_m)\|_{L^q(\overline{w})} \lesssim
\prod_{i=1}^m\|f_i\|_{H^{p_i}(w_i)},
\end{equation*}
where
\begin{equation*}
\overline{w} = \prod_{i=1}^mw_i^{\frac{q}{p_i}}.
\end{equation*}
\end{theorem}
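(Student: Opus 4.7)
The plan is to follow the three-step scheme from \cite{CMN18} adapted to the fractional setting: (i) the weighted atomic decomposition of each $H^{p_i}(w_i)$; (ii) a product-form pointwise bound on $T_\gamma$ applied to an $m$-tuple of atoms; (iii) multilinear Hölder's inequality combined with the summation Lemmas \ref{lm-022} and \ref{lm-023}. The fractional exponent $\gamma$ is distributed via the auxiliary parameters $\gamma_i := n(1/p_i - 1/q_i) \geq 0$, which satisfy $\sum_i \gamma_i = \gamma$ by the identity $\sum 1/q_i = 1/q = \sum 1/p_i - \gamma/n$; the hypothesis $w_i \in RH_{q_i/p_i}$ is precisely what Lemmas \ref{lm-022} and \ref{lm-023} require with fractional parameter $\gamma_i$, exponents $(p_i,q_i)$, and weight $w_i$.

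Write $f_i = \sum_{k_i} \lambda_{k_i}^i a_{k_i}^i$ with standard weighted $(p_i,\infty,N)$-atoms on cubes $Q_{k_i}^i$ of center $c_{k_i}^i$: $\|a_{k_i}^i\|_\infty \lesssim w_i(Q_{k_i}^i)^{-1/p_i}$, $\int y^\beta a_{k_i}^i\,dy = 0$ for $|\beta|\le N$, and $\|\sum_{k_i}|\lambda_{k_i}^i| w_i(Q_{k_i}^i)^{-1/p_i}\chi_{Q_{k_i}^i}\|_{L^{p_i}(w_i)} \lesssim \|f_i\|_{H^{p_i}(w_i)}$, the form required by Section \ref{section:prelim}. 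For each multi-index $\vec k$, let $Q_{\vec k}$ be the cube of largest side length in $\{Q_{k_i}^i\}_i$; by symmetry treat the $m$ cases $Q_{\vec k} = Q_{k_{i_0}}^{i_0}$ separately and fix $i_0 = 1$. Split $T_\gamma(a_{k_1}^1,\ldots,a_{k_m}^m)$ into a local part on $Q_{\vec k}^{**}$ and a global part on its complement. On the local part, combine the size condition \eqref{eq-001} with the weighted AM-GM $\sum|x-y_i|\gtrsim \prod|x-y_i|^{(n-\gamma_i)/(mn-\gamma)}$ (valid since $\sum(n-\gamma_i)/(mn-\gamma)=1$) to obtain a pointwise product bound whose $i$-th factor is $w_i(Q_{k_i}^i)^{-1/p_i}I_{\gamma_i}(\chi_{Q_{k_i}^i})(x)$, naturally split into local-plus-tail form matching Lemmas \ref{lm-022}--\ref{lm-023}. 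On the global part, Taylor-expand $K_\gamma$ in $y_1$ about $c_{k_1}^1$ to order $N$, use the vanishing moments of $a_{k_1}^1$ to annihilate the polynomial, and bound the remainder by \eqref{eq-002}; an analogous AM-GM factorization with the extra $N$ derivatives concentrated on the distinguished $i=1$ slot (that is, $\alpha_1 = n+N-\gamma_1$ and $\alpha_i = n-\gamma_i$ for $i\neq 1$, with $\sum\alpha_i = mn+N-\gamma$) produces an improved tail factor of the form $w_1(Q_{k_1}^1)^{-1/p_1}\ell(Q_{k_1}^1)^{n+N}|x-c_{k_1}^1|^{\gamma_1-n-N}\chi_{(Q_{k_1}^{1,**})^c}(x)$, matching Lemma \ref{lm-023} with $\epsilon = n+N$.

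Expand the sum $T_\gamma(f_1,\ldots,f_m)(x) = \sum_{\vec k}\prod_i \lambda_{k_i}^i T_\gamma(a_{k_1}^1,\ldots,a_{k_m}^m)(x)$ using the product-in-$k$ structure, drop the size constraint at the cost of an overall factor $m$ from the symmetric cases, and apply multilinear Hölder with $\sum 1/q_i = 1/q$ using $\overline{w}^{1/q} = \prod_i w_i^{1/p_i}$; this separates the estimate into $m$ independent single-index sums. Apply Lemma \ref{lm-022} to the local terms and Lemma \ref{lm-023} to the tail terms; the hypothesis $N > mn(r_{w_i}/p_i - 1)$ ensures that the tail exponent $\epsilon = n+N$ satisfies the requirement $\epsilon > \max(nr_{w_i}/p_i, n)$ of Lemma \ref{lm-023}. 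Each factor reduces to $\|\sum_{k_i}|\lambda_{k_i}^i| w_i(Q_{k_i}^i)^{-1/p_i}\chi_{Q_{k_i}^i}\|_{L^{p_i}(w_i)} \lesssim \|f_i\|_{H^{p_i}(w_i)}$, yielding the conclusion. The main obstacle is the asymmetry between the symmetric AM-GM factorization of the kernel and the fact that \eqref{eq-002} permits Taylor expansion in only one variable at a time, so the extra $N$ decay from cancellation can be placed on a single distinguished slot per $\vec k$; verifying that the non-distinguished tails (with default exponent $\epsilon = n$, on the boundary of Lemma \ref{lm-023}'s hypothesis) can be accommodated—either by exploiting $w_i\in A_{p_i}$ at those slots or by a refined redistribution of the $N$-decay through an equal-split AM-GM off the supports of the non-distinguished atoms—is the principal technical point requiring care.
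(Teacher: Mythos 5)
Your overall scheme is the same as the paper's (finite weighted atomic decomposition, AM--GM factorization of the kernel into a product of single-variable pieces, a Taylor expansion against the vanishing moments of one atom, H\"older's inequality, and Lemmas~\ref{lm-022} and~\ref{lm-023} on the resulting single-index sums), but there is a genuine gap in the decomposition you choose, and it is exactly the one you flag at the end but do not resolve: once the size bound or the remainder estimate is factored, every slot other than the distinguished one produces a tail of the form $\ell(Q_{i,k_i})^n\, |x-c_{i,k_i}|^{\gamma_i-n}\chi_{(Q_{i,k_i}^*)^c}(x)$, i.e.\ a term to which Lemma~\ref{lm-023} would have to be applied with $\epsilon=n$. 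Lemma~\ref{lm-023} requires $\epsilon>\max(nr_{w_i}/p_i,n)$, which in the Hardy-space regime $p_i\le 1$, $r_{w_i}>1$ is a \emph{strict} inequality; the boundary case $\epsilon=n$ is not covered, and the two workarounds you suggest do not close it. The first (using $w_i\in A_{p_i}$) is not available: the hypothesis gives only $w_i\in RH_{q_i/p_i}\subset A_\infty$, not $A_{p_i}$. The second (redistributing the $N$-decay) is the right idea but cannot be carried out from your chosen pivot. You take the distinguished cube to be the one of \emph{largest} side length $Q_{\vec k}$ and Taylor-expand in that variable, so the remainder carries a factor $\ell(Q_{\vec k})^{N}$. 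To spread this over the other slots you would need $\ell(Q_{\vec k})^N\lesssim\prod_i\ell(Q_{i,k_i})^{N/m}$, which is the \emph{wrong} inequality when $Q_{\vec k}$ is largest.

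The paper's decomposition is designed precisely to avoid these $\epsilon=n$ tails. It splits $x$-space into $E_{\vec k}=\bigcap_{i} Q_{i,k_i}^*$ and its complement $F_{\vec k}$. On $E_{\vec k}$ the product factorization of the size bound yields $\prod_i\ell(Q_{i,k_i})^{\gamma_i}\chi_{Q_{i,k_i}^*}(x)$ with \emph{no} tail at any slot, so only Lemma~\ref{lm-022} is needed there. On $F_{\vec k}$, writing $\Lambda=\{i:x\notin Q_{i,k_i}^*\}\ne\emptyset$, the distinguished index is chosen as the index $i_0\in\Lambda$ whose cube is \emph{smallest}, so that $\ell(Q_{i_0,k_{i_0}})^N\le\prod_{i\in\Lambda}\ell(Q_{i,k_i})^{N/|\Lambda|}$, and the extra $N$ derivatives are distributed only over the slots in $\Lambda$: one takes $\alpha_i=n+N/|\Lambda|-\gamma_i$ for $i\in\Lambda$ and $\alpha_i=n-\gamma_i$ for $i\notin\Lambda$. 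For $i\notin\Lambda$ the factor is local ($x\in Q_{i,k_i}^*$), handled by Lemma~\ref{lm-022}; for $i\in\Lambda$ the factor is a tail with exponent $\epsilon_N=n+N/|\Lambda|\geq n+N/m>nr_{w_i}/p_i$, exactly in the range of Lemma~\ref{lm-023}. Thus the $\epsilon=n$ boundary case simply never arises. To repair your argument you would need to replace the ``largest cube, $Q_{\vec k}^{**}$'' split by the intersection split $E_{\vec k}$, and pick the Taylor pivot to be the smallest cube among those that $x$ lies outside of, not the largest.

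A smaller remark: your pointwise product bound on the ``local'' part is asserted to hold on $Q_{\vec k}^{**}$, but since the cubes $Q_{i,k_i}$ need not intersect, $Q_{\vec k}^{**}$ is not contained in $\bigcap_i Q_{i,k_i}^*$ and the factors $\chi_{Q_{i,k_i}^*}(x)$ can vanish there while $T_\gamma$ does not; the bound $|T_\gamma(\vec a)(x)|\lesssim\prod_i\ell(Q_{i,k_i})^{\gamma_i}\chi_{Q_{i,k_i}^*}(x)$ is valid only for $x\in E_{\vec k}$, which is another reason to decompose at $E_{\vec k}$ rather than at $Q_{\vec k}^{**}$.
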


\begin{remark}
  Theorem \ref{th-001}  follows from Theorem \ref{thm:genver} by
  taking $q_i=\frac{q}{p}p_i$. 
\end{remark}

\begin{proof}
Recall that for $w\in A_\infty$, $H^p(w)$ is defined as the set of all
distributions $f$ such that $\M_{N_0}f\in L^p(w)$; here $N_0$ is some
large constant whose precise value does not matter, though it will be
implicit in our constants.  For more information,
see~\cite{MR1011673}.  
Let $N$ be the positive integer as in the hypotheses of Theorem~\ref{th-001}. Define
\[
\mathcal{O}_N=
\big\{f\in \mathcal{C}_0^{\infty}\ :\
\int_{\mathbb{R}^n}x^{\beta}f(x)\,dx=0,\quad 0 \leq|\beta|\le N \big\}.
\]
Then $\mathcal{E}_i=\mathcal{O}_N\cap H^{p_i}(w_i)$ is dense in
$H^{p_i}(w_i)$ for all $1\le i\le m$ (see~\cite{CMN18,MR1011673}).   
As proved in \cite[Theorem~2.6]{CMN18} for each $f_i\in
\mathcal{E}_i$, $ 1\le i\le m$, 
we have a finite atomic decomposition:
\begin{equation}
\label{eq-201}
f_i = \sum_{k_i}\lambda_{i,k_i}a_{i,k_i},
\end{equation}
where $\lambda_{i,k_i}>0, |a_i|\le \chi_{Q_{i,k_i}}$ for some cube $Q_{i,k_i}$, $\int x^{\alpha}a_{i,k_i}dx=0$ for all $|\alpha|\le N$, and
\begin{equation}
\label{eq-202}
\big\|\sum_{k_i}\lambda_{i,k_i}\chi_{Q_{i,k_i}}\big\|_{L^{p_i}(w_i)}
\lesssim
\|f_i\|_{H^{p_i}(w_i)}.
\end{equation}

By a standard density argument, it will suffice to show that
inequality \eqref{eq-006} holds for $f_i$ of the form \eqref{eq-201}.
Define  $0<\gamma_i<\infty$ by 
\begin{equation*}
\frac{\gamma_i}{n} 
 = \frac 1{p_i} - \frac 1{q_i},\quad 1\le i\le m.
\end{equation*}
From the hypothesis \eqref{eq-005} we get
\begin{equation}
\label{eq-203}
\sum_{i=1}^m\gamma_i = \gamma,\qquad 0< \gamma_i < \frac n{p_i},\quad 1\le i\le m.
\end{equation}
By the multilinearity of $T_\gamma$ we get that
\begin{align*}
T_\gamma(f_1,\ldots,f_m)(x) =& \sum_{k_1,\ldots,k_m}
\lambda_{1,k_1}\cdots \lambda_{m,k_m}
T_\gamma(a_{1,k_1},\ldots,a_{m,k_m})(x).
\end{align*}
Given a cube $Q$ and $\vec{k} = (k_1,\ldots,k_m)$,  define
\begin{equation*}
E_{\vec{k}} = 
\cap_{i=1}^mQ_{i,k_i}^*,\qquad F_{\vec{k}} = \mathbb{R}^n\setminus E_{\vec{k}}.
\end{equation*}
Then we can decompose $T_\gamma(f_1,\ldots,f_m) = G_1+G_2$ where
\begin{align*}
G_1 = &
\sum_{k_1,\ldots,k_m}
\lambda_{1,k_1}\cdots\lambda_{m,k_m}
T_\gamma(a_{1,k_1},\ldots,a_{m,k_m})\chi_{E_{\vec{k}}},\\
G_2=&\sum_{k_1,\ldots,k_m}
\lambda_{1,k_1}\cdots\lambda_{m,k_m}
T_\gamma(a_{1,k_1},\ldots,a_{m,k_m})\chi_{F_{\vec{k}}}.
\end{align*}

To estimate the first term, we may assume that $E_{\vec{k}}$ is not
empty.  With $\gamma_i$ as defined by~\eqref{eq-203} we can estimate
$T_\gamma(a_{1,k_1},\ldots,a_{m,k_m})(x)$ for all $x\in E_{\vec{k}}$
as follows:
\begin{align}
|T_\gamma(a_{1,k_1},\ldots,a_{m,k_m})(x)|\le &
\int_{\mathbb{R}^{mn}}
\frac{
\chi_{Q_{1,k_1}}(y_1)\cdots \chi_{Q_{m,k_m}}(y_m)\;d\vec{y}
}
{
\big(|x-y_1|+\cdots+|x-y_m|\big)^{mn-\gamma}
}\nonumber\\
\le&
\prod_{i=1}^m
\Big(\int_{Q_{i,k_i}}|x-y_i|^{\gamma_i-n}dy_i\Big)\chi_{Q_{i,k_i}^*}(x)\nonumber\\
\lesssim&
\prod_{i=1}^m\ell(Q_{i,k_i})^{\gamma_i}\chi_{Q_{i,k_i}^*}(x).
\label{eq-0B0310}
\end{align}
We can now estimate the quasi-norm of $G_1$ as follows:
\begin{align}
\|G_1\|_{L^q(\overline{w})} = &
\Big\|\sum_{k_1,\ldots,k_m}
\lambda_{1,k_1}\cdots\lambda_{m,k_m}
T_\gamma(a_{1,k_1},\ldots,a_{m,k_m})\chi_{E_{\vec{k}}}
\Big\|_{L^q(\overline{w})}\nonumber\\
\lesssim&
\Big\|\sum_{k_1,\ldots,k_m}
\lambda_{1,k_1}\cdots\lambda_{m,k_m}
\prod_{i=1}^m\ell(Q_{i,k_i})^{\gamma_i}\chi_{Q_{i,k_i}^*}
\Big\|_{L^q(\overline{w})}\nonumber\\
=&
\Big\|
\prod_{i=1}^m
\Big(\sum_{k_i}\lambda_{i,k_i}\ell(Q_{i,k_i})^{\gamma_i}\chi_{Q_{i,k_i}^*}\Big)
\Big\|_{L^q(\overline{w})}\nonumber\\
\label{eq-315}
\le&
\prod_{i=1}^m
\Big\|
\sum_{k_i}\lambda_{i,k_i}\ell(Q_{i,k_i})^{\gamma_i}\chi_{Q_{i,k_i}^*}
\Big\|_{L^{q_i}(w_i^{q_i/p_i})}\\
\label{eq-316}
\lesssim&
\prod_{i=1}^m
\Big\|
\sum_{k_i}\lambda_{i,k_i}\chi_{Q_{i,k_i}}
\Big\|_{L^{p_i}(w_i)};
\end{align}
for inequality \eqref{eq-315} we used H\"older's inequality, and
for~\eqref{eq-316} we used Lemma \ref{lm-022}.

\medskip

To estimate $G_2$, fix $x\in F_{\vec{k}}$.  Then there exists a
non-empty subset $\Lambda$ of $\{1,\ldots,m\}$ such that
$x\notin Q_{i,k_i}^*,$ for all $i\in \Lambda$ and $x\in Q_{j,k_j}^*$
for all $1\le j\le m, j\not\in \Lambda$. Let $Q_{i_0,k_{i_0}}$, for
some $i_0\in \Lambda$, be the cube with smallest length among
$Q_{i,k_i},i\in\Lambda$ and let $c_{i,k_i}$ be the center of the cube
$Q_{i,k_{i}}$.  Note that since $x\notin Q_{i_0,k_{i_0}}^*$,
$|x-y_0| \lesssim |x-c_{i_0,k_{i_0}}|$ for all
$y_{i_0}\in Q_{i_0,k_{i_0}}$.  Let
\[
P_N(x,y_1,\ldots,,c_{i_0,k_{i_0}},\ldots,y_m)
=
\sum_{|\beta|<N}\frac{ \partial^{\beta}_{i_0} K_\gamma(x,y_1,\ldots,,c_{i_0,k_{i_0}},\ldots,y_m) }{\beta!}(y_{i_0}-c_{i_0,k_{i_0}})^\beta
\]
be the Taylor polynomial of $K_\gamma$.  
Then the  cancellation conditions satisfied by the atoms $a_{i_0,k_{i_0}}$ and the smoothness of $K_\gamma$ in \eqref{eq-002} imply that
\begin{align}
\lefteqn{|T_\gamma(a_{1,k_1},\ldots,a_{m,k_m})(x)|}\nonumber\\
\le&
\int |K_\gamma(x,y_1,\ldots,y_m) - P_N(x,y_1,\ldots,,c_{i_0,k_{i_0}},\ldots,y_m)|
\prod_{i=1}^m|a_{i,k_i}(y_i)|d\vec{y}\nonumber\\
\lesssim&
\int \frac{\ell(Q_{i_0,k_{i_0}})^N
\prod_{i=1}^m\chi_{Q_{i,k_i}}(y_i)\;d\vec{y} }{(|x-y_1|+\cdots+|x-y_m|)^{mn+N-\gamma}}\nonumber\\
\lesssim&
\prod_{i\in\Lambda}
\int \frac{\ell(Q_{i_0,k_{i_0}})^{\frac{N}{|\Lambda|}}\chi_{Q_{i,k_{i}}}(y_i)\;dy_i}{|x-y_i|^{n+\frac{N}{|\Lambda|}-\gamma_i}}
\cdot
\prod_{i\notin \Lambda}
\int\frac{ \chi_{Q_{i,k_i}(y_i)}dy_i } { |x-y_i|^{n-\gamma_i} }\nonumber\\
\lesssim&
\prod_{i\in\Lambda}
\int \frac{\ell(Q_{i,k_i})^{\frac{N}{|\Lambda|}}\chi_{Q_{i,k_{i}}}(y_i)\;dy_i}
{|x-y_i|^{n+\frac{N}{|\Lambda|}-\gamma_i}}
\cdot
\prod_{i\notin \Lambda}
\int\frac{ \chi_{Q_{i,k_i}(y_i)}dy_i } { |x-y_i|^{n-\gamma_i} }\nonumber\\
\lesssim&
\prod_{i\in\Lambda}
\frac{\ell(Q_{i,k_i})^{\epsilon_N}\chi_{(Q_{i,k_i}^*)^c}(x) }
{|x-c_{i,k_i}|^{\epsilon_N -\gamma_i}}
\cdot
\prod_{i\notin\Lambda}\ell(Q_{i,k_i})^{\gamma_i}\chi_{Q_{i,k_i}^*}(x),
\label{eq-0B0310}
\end{align}
where $\epsilon_N = n+\frac Nm$.  (For details of this calculation,
see~\cite[Lemma~3.6]{CMN18}.)  Since
$w_i\in RH_{\frac{q_i}{p_i}}\subset A_\infty$, $1\le i\le m$,
for all $r_i>r_{w_i}$,  $w_i\in A_{r_i}$.  By our assumption on $N$ in
the hypotheses, we can choose $r_i$ close enough to $r_{w_i}$ so that
\[
\epsilon_N = n+\frac Nm > \frac{r_i}{p_i}.
\]
If we combine the above estimates we get
\begin{align*}
\|G_2\|_{L^q(\overline{w})}
\le&
\Big\|
\sum_{k_1,\ldots,k_m}
\lambda_{1,k_1}\cdots\lambda_{m,k_m}
|T_\gamma(a_{1,k_1},\ldots,a_{m,k_m})|
\chi_{F_{\vec{k}}}
\Big\|_{L^q(\overline{w})}\\
\lesssim&
\Big\|
\sum_{k_1,\ldots,k_m}
\prod_{i\in\Lambda}
\frac{\lambda_{i,k_i}\ell(Q_{i,k_i})^{\epsilon_N}\chi_{(Q_{i,k_i}^*)^c}}
{|\cdot-c_{i,k_i}|^{\epsilon_N -\gamma_i}}
\cdot
\prod_{i\notin\Lambda}
\lambda_{i,k_i}\ell(Q_{i,k_i})^{\gamma_i}\chi_{Q_{i,k_i}^*}
          \Big\|_{L^q(\overline{w})}.
\end{align*}
By H\"older''s inequality, and Lemmas~\ref{lm-022} and~\ref{lm-023} we
get 
\begin{align}
\|G_2 \|_{L^q(\overline{w}^q)}
\lesssim&
\prod_{i\in\Lambda}
\Big\|
\sum_{k_i}\lambda_{i,k_i}
\frac{\ell(Q_{i,k_i})^{\epsilon_N}\chi_{(Q_{i,k_i}^*)^c}}
{|\cdot-c_{i,k_i}|^{\epsilon_N -\gamma_i}}
\Big\|_{L^{q_i}(w_i^{q_i/p_i})}\nonumber\\
\notag
&\times
\prod_{i\notin\Lambda}
\Big\|
\lambda_{i,k_i}\ell(Q_{i,k_i})^{\gamma_i}\chi_{Q_{i,k_i}^*}
\Big\|_{L^{q_i}(w^{q_i/p_i})}\\
\label{eq-3025}
\lesssim&
\prod_{i=1}^m
\Big\|
\sum_{k_i}\lambda_{i,k_i}\chi_{Q_{i,k_i}}
\Big\|_{L^{p_i}(w_i)}.
\end{align}
Combining \eqref{eq-316} and \eqref{eq-3025} we get
\[
\|T_\gamma(f_1,\ldots,f_m)\|_{L^{q}(\overline{w})}
\lesssim
\prod_{i=1}^m
\Big\|
\sum_{k_i}\lambda_{i,k_i}\chi_{Q_{i,k_i}}
\Big\|_{L^{p_i}(w_i)},
\]
which, when combined with \eqref{eq-202}, gives us the desired estimate
for $T_\gamma$. 
\end{proof}

\begin{remark}
  If $0<\gamma<(m-l)n$ for some $1\le l<m$, then we can allow at most
  $l$ exponents among the  $\{p_1,\ldots,p_m\}$ to be infinite and the
  conclusion of Theorem~\ref{thm:genver} is still true, replacing
  $H^{p_i}(w_i)$ with $L^\infty$.   To see this, first note that we may assume that
  $p_{m-l+1}=\cdots=p_m=\infty$. Then we can  integrate in
  $y_{m-l+1},\ldots,y_m$ to estimate \eqref{eq-0B0310} as follows:
\begin{align*}
& |T_\gamma(a_{1,k_1},\ldots,a_{m-l,k_{m-l}},f_{m-l+1},\ldots,f_m)(x)|\\
& \qquad \le 
\int_{\mathbb{R}^{mn}}
\frac{
\chi_{Q_{1,k_1}}(y_1)\cdots \chi_{Q_{m-l,k_{m-l}}}(y_{m-l})f_{m-l+1}(y_{m-l+1})\cdots f_m(y_m)\;d\vec{y}
}
{
\big(|x-y_1|+\cdots+|x-y_l|\big)^{nl-\gamma}
}\\
& \qquad \le 
\int_{\mathbb{R}^{nl}}
\frac{
\chi_{Q_{1,k_1}}(y_1)\cdots \chi_{Q_{m-l,k_{m-l}}}(y_{m-l})\;dy_1\cdots dy_l
}
{
\big(|x-y_1|+\cdots+|x-y_l|\big)^{nl-\gamma}
}
\prod_{i=l+1}^m\|f_i\|_{L^{\infty}}
\\
& \qquad \le
\prod_{i=1}^l
\Big(\int_{Q_{i,k_i}}|x-y_i|^{\gamma_i-n}dy_i\Big)\chi_{Q_{i,k_i}^*}(x)
\prod_{i=l+1}^m\|f_i\|_{L^{\infty}}
\\
& \qquad \lesssim
\prod_{i=1}^l\ell(Q_{i,k_i})^{\gamma_i}\chi_{Q_{i,k_i}^*}(x)
\prod_{i=l+1}^m\|f_i\|_{L^{\infty}}.
\end{align*}
If we now repeat the argument in the proof of Theorem \ref{th-001}, we get
\[
\|T_\gamma(f_1,\ldots,f_m)\|_{L^{q}(\overline{w})}
\lesssim
\prod_{i=1}^l
\|f_i\|_{H^{p_i}(w_i)}
\prod_{i=l+1}^m\|f_i\|_{L^{\infty}}.
\]

In their work, Lin and Lu~\cite[Theorem~2.1]{MR2353641} assumed an unweighted
estimate similar to this one when $l=m-1$.
This helps to explain the restriction $0<\gamma<n$
in their results.
\end{remark}

\section{Boundedness on Variable Hardy Spaces} 
\label{section:hardy}

In this section, we state and prove the analogue of 
Theorem~\ref{th-001} on the variable
exponent Hardy spaces, $H^\pp$.  
To do so, we first recall
some basic facts about the variable exponent Lebesgue and Hardy spaces.  For complete
background information we refer the reader to~\cite{cruz-fiorenza-book}.

Let $\Pp_0(\R^n)$ be the set of all measurable functions $\pp : \R^n
\rightarrow (0,\infty)$.  Define
\[ [p(\cdot)]_- = \essinf_{x\in \mathbb{R}^n} p(x), 
\qquad [p(\cdot)]_+ = \esssup_{x\in \mathbb{R}^n}
p(x). \]
Given $\pp\in \Pp_0(\R^n)$ define $\Lp=\Lp(\R^n)$ to be the set of all
measurable functions $f$ such that for some $\lambda>0$,
\[ \rho(f/\lambda) = \int_{\mathbb{R}^n}
\left(\frac{|f(x)|}{\lambda}\right)^{p(x)}\,dx< \infty.  \]
This becomes a quasi-Banach space with the quasi-norm
$$ \|f\|_{L^{p(\cdot)}} = \inf\left\{ \lambda > 0 : \rho(f/\lambda)\leq 1
\right\}.
$$
If $[p(\cdot)]_-\geq 1$, then $\|\cdot\|_{L^\pp}$ is a norm and  $\Lp$ is a
Banach space.  For all $p>0$, if $\pp=p$ a constant, then
$\Lp=L^p$ with equality of norms, so the variable exponent Lebesgue
spaces are a generalization of the classical $L^p$ spaces.

Let $\B$ denote the collection of exponents $\pp$ such that the
Hardy-Littlewood maximal operator is bounded on $\Lp$.  A sufficient
(but not necessary) condition for $\pp\in \B$ is that
$1<[p(\cdot)]_-\leq [p(\cdot)]_+<\infty$ and $\pp$ is log-H\"older
continuous: there exist constants $C_0$, $C_\infty$ and $p_\infty$
such that
\[ 
 |p(x)-p(y)| \leq \frac{C_0}{-\log(|x-y|)}, \qquad
  0<|x-y|<\frac{1}{2}, 
\]
and
\[
 |p(x) - p_\infty| \leq \frac{C_\infty}{\log(e+|x|)}. 
\]

Given $\pp\in \Pp_0(\R^n)$, the variable Hardy space $H^\pp$ is
defined to be the set of all distributions $f$ such that $\mathcal{M}_{N_0}f\in
\Lp$.  Again, we here assume $N_0>0$ is a sufficiently large integer
so that all the standard definitions of the classical Hardy spaces are
equivalent in $H^\pp$.  For further details, see~\cite{CMN18,DCU-dw-P2014}.  

We will prove norm inequalities in the variable exponent Lebesgue and Hardy spaces
from the corresponding weighted norm inequalities by
applying the theory of Rubio de Francia extrapolation.   In the linear
case this approach was introduced
in~\cite{cruz-fiorenza-book,cruz-uribe-fiorenza-martell-perez06,CMP11},
and multilinear versions of extrapolation into the variable Lebesgue
spaces were proved in~\cite{CMN18,CN16}.  Here we need a
generalization of these results similar to the multilinear extrapolation theorem
proved in~\cite{CruzUribe:2018hg}.    
We state our extrapolation results in terms of extrapolation $(m+1)$-tuples;
for more on this approach, see~\cite{CMP11,CN16}.

\begin{theorem}\label{thm:xtpl}
  Let $\F = \big\{ (f_1,\ldots,f_m,F)\big\}$ be a family of
  $(m+1)$-tuples of non-negative, measurable functions on $\R^n$.
  Given $0<\gamma < mn$ and exponents $0<p_i<\infty$,
  $1\leq i \leq m$, such that~\eqref{eq-004} holds,
  define $q>0$ by~\eqref{eq-005}. 
  Suppose  that for all exponents $p_i<q_i <\infty$ such that
  \[ \frac{1}{q} =  \frac{1}{q_1}+\cdots+\frac{1}{q_m}, \]
  and for all weights $w_i \in RH_{q_i/p_i}$, with
 $\overline{w} = \prod_{i=1}^m w_i^{q/p_i}$,
 we have that
\begin{equation} \label{eqn:xtpl1}
\|F\|_{L^{q}(\overline{w})} \lesssim  \prod_{i=1}^m\|f_i\|_{L^{p_i}(w_i)}
\end{equation}
for all $(f_1,\ldots,f_m,F)\in \F$ such that $F\in L^{q}(\overline{w})$,
 where the
implicit constant depends only on $n$, $p_i$, and
$[w_i]_{RH_{q_i/p_i}}$.

Let $p_i(\cdot)\in \Pp_0$, $1\leq i \leq m$,
be such that  each $p_i(\cdot)$ is log-H\"older continuous, $p_i< [p_i(\cdot)]_-\leq [p_i(\cdot)]_+<\infty$,
and
\begin{equation} \label{eqn:r-defn}
\frac{1}{[p_1(\cdot)]_+} + \cdots + \frac{1}{[p_m(\cdot)]_+} >
  \frac{\gamma}{n}. 
\end{equation}
Define $\qq$ by
\begin{equation} \label{eqn:s-defn}
\frac{1}{q(\cdot)} = \frac{1}{p_1(\cdot)}+\cdots+\frac{1}{p_m(\cdot)} -\frac{\gamma}{n}.
\end{equation}
Then for all $(f_1,\ldots,f_m,F)\in \F$ such that $\|F\|_{L^{q(\cdot)}}<\infty$,
\begin{equation} \label{eqn:xtpl2}
\|F\|_{L^{q(\cdot)}} \lesssim \prod_{i=1}^m\|f_i\|_{L^{p_i(\cdot)}}.
\end{equation}
The implicit constant only depends
on $n$, $[p_i(\cdot)]_-$, $[p_i(\cdot)]_+$, and the log-H\"older
constants of $p_i(\cdot)$.
\end{theorem}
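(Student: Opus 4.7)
The plan is to follow the template of multilinear extrapolation into variable exponent spaces (cf.~\cite{CMN18,CN16,CruzUribe:2018hg}), adapted to the off-diagonal, fractional setting by running $m$ parallel Rubio de Francia iterations keyed to a splitting of the dual function. First I would reduce to a scalar integral by duality: from $p_i<[p_i(\cdot)]_-$ and~\eqref{eqn:r-defn} one checks that $s(\cdot):=q(\cdot)/q$ satisfies $1<[s(\cdot)]_-\le[s(\cdot)]_+<\infty$ and is log-H\"older continuous, so the variable exponent duality (see \cite[Corollary~2.28]{cruz-fiorenza-book}) gives
\[
\|F\|_{L^{q(\cdot)}}^q=\|F^q\|_{L^{s(\cdot)}}\lesssim\sup_{h\ge0,\,\|h\|_{L^{s'(\cdot)}}\le1}\int_{\R^n}F^q h\,dx,
\]
and it suffices to bound $\int F^q h\,dx$ uniformly in admissible $h$.

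Next, using~\eqref{eqn:r-defn}, choose $\gamma_i>0$ with $\sum_i\gamma_i=\gamma$ and $\gamma_i<n/[p_i(\cdot)]_+$, and set $1/q_i=1/p_i-\gamma_i/n$, $s_i=q_i/p_i$, $r_i(\cdot):=(p_i(\cdot)/p_i)'$, and $t_i(\cdot):=p_ir_i(\cdot)/q$. A direct computation shows $[r_i(\cdot)/s_i]_->1$ (equivalent to $\gamma_i<n/[p_i(\cdot)]_+$) and $[t_i(\cdot)]_->1$ (where the full strength of~\eqref{eqn:r-defn} is used), together with the pointwise identity $\sum_i 1/t_i(\cdot)=1/s'(\cdot)$; combined with log-H\"older continuity of $p_i(\cdot)$, this ensures that the Hardy-Littlewood maximal operator is bounded on each of $L^{r_i(\cdot)/s_i}$, $L^{t_i(\cdot)}$, and $L^{s'(\cdot)}$. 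Factor $h=\prod_i h_i$ by setting $h_i:=h^{s'(\cdot)/t_i(\cdot)}$, which gives $\|h_i\|_{L^{t_i(\cdot)}}\le1$. Because $t_i(\cdot)=(q_i/q)(r_i(\cdot)/s_i)$, the homogeneity $\|g^{\alpha}\|_{L^{\rho(\cdot)}}=\|g\|_{L^{\alpha\rho(\cdot)}}^{\alpha}$ of the variable $L^p$ norm yields $\tilde h_i:=h_i^{q_i/q}\in L^{r_i(\cdot)/s_i}$ with norm at most $1$.

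For each $i$, let
\[
\mathcal R_i g:=\sum_{k=0}^\infty\frac{M^k g}{(2\|M\|_{L^{r_i(\cdot)/s_i}})^k}
\]
be the Rubio de Francia iterate, so $\mathcal R_i\tilde h_i\ge\tilde h_i$, $\|\mathcal R_i\tilde h_i\|_{L^{r_i(\cdot)/s_i}}\le 2$, and $\mathcal R_i\tilde h_i\in A_1$ with constant controlled by the log-H\"older data of $p_i(\cdot)$. Set $w_i:=(\mathcal R_i\tilde h_i)^{1/s_i}$; the elementary implication $u\in A_1\Rightarrow u^{1/s_i}\in RH_{s_i}$ places $w_i\in RH_{q_i/p_i}$ with uniform constant, and by construction
\[
\prod_i w_i^{q/p_i}=\prod_i(\mathcal R_i\tilde h_i)^{q/q_i}\ge\prod_i\tilde h_i^{q/q_i}=\prod_i h_i=h.
\]
Applying~\eqref{eqn:xtpl1} with these weights, then variable H\"older $\int f_i^{p_i}w_i\,dx\lesssim\|f_i\|_{L^{p_i(\cdot)}}^{p_i}\|w_i\|_{L^{r_i(\cdot)}}$, together with $\|w_i\|_{L^{r_i(\cdot)}}=\|\mathcal R_i\tilde h_i\|_{L^{r_i(\cdot)/s_i}}^{1/s_i}\le2^{1/s_i}$, produces $\int F^q h\,dx\lesssim\prod_i\|f_i\|_{L^{p_i(\cdot)}}^q$; taking the supremum over $h$ and the $q$-th root gives~\eqref{eqn:xtpl2}.

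The main obstacle is arranging the construction so that the $w_i$ simultaneously lie in $RH_{q_i/p_i}$, dominate $h$ in the product $\prod_i w_i^{q/p_i}$, and have uniformly bounded $L^{r_i(\cdot)}$ norms. The compatibility of these three requirements rests on the identities $\sum_i 1/q_i=1/q$, $t_i(\cdot)=(q_i/q)(r_i(\cdot)/s_i)$, and $\sum_i 1/t_i(\cdot)=1/s'(\cdot)$, together with the freedom in choosing the $\gamma_i$'s provided by~\eqref{eqn:r-defn}. This is precisely where the off-diagonal, multilinear character of the hypothesis is essential: the uniform choice $q_i=qp_i/p$ would impose the strictly stronger individual condition $[p_i(\cdot)]_+<qp_i/(q-p_i)$ on each $p_i(\cdot)$, whereas the freedom to take different $\gamma_i$'s subject only to the sum constraint $\sum\gamma_i=\gamma$ is what reduces the requirement back to the aggregated inequality~\eqref{eqn:r-defn}.
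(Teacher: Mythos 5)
Your proposal follows the same strategy as the paper's own proof, with essentially identical ingredients: the duality reduction to $\int F^q h\,dx$, the splitting $\gamma=\sum_i\gamma_i$ with $\gamma_i<n/[p_i(\cdot)]_+$, the factorization of the dual function $h$ into pieces $h_i=h^{\theta_i(\cdot)}$ (your $\theta_i(\cdot)=s'(\cdot)/t_i(\cdot)$ is the paper's $\theta_i(\cdot)$, and your $r_i(\cdot)/s_i$ is the paper's $\sigma_i(\cdot)$), the Rubio de Francia iteration on $L^{\sigma_i(\cdot)}$, the rescaling step $(\Rdf_i g)^{p_i/q_i}\in A_1\cap RH_{q_i/p_i}$, and the final variable-exponent H\"older estimate. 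The arithmetic checks out, and your observation about why the off-diagonal freedom (choosing the $\gamma_i$ independently subject only to $\sum\gamma_i=\gamma$) is essential matches the paper's remark.

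There is one genuine gap, however. The hypothesis~\eqref{eqn:xtpl1} is stated as holding only for tuples $(f_1,\ldots,f_m,F)\in\F$ with $F\in L^q(\overline{w})$. When you bound $\int F^q h\,dx\le\int F^q\prod_i w_i^{q/p_i}\,dx=\|F\|_{L^q(\overline{w})}^q$ and then invoke~\eqref{eqn:xtpl1}, you have not yet verified that this quantity is finite, i.e., that $F\in L^q(\overline{w})$ for the weight you just constructed; if it were infinite, the hypothesis gives no information. This must be checked separately, and the paper devotes its final paragraph to it: using the identity $1/\overline{q}(\cdot)+\sum_i 1/t_i(\cdot)=1$ (noting $[t_i(\cdot)]_-\ge[\overline{q}'(\cdot)]_->1$ since $\theta_i(\cdot)\le1$) and the multi-term generalized H\"older inequality, one bounds $\int F^q\prod_i w_i^{q/p_i}\,dx$ by $\|F^q\|_{L^{\overline{q}(\cdot)}}\prod_i\|w_i^{q/p_i}\|_{L^{t_i(\cdot)}}$, each factor finite thanks to $\|F\|_{L^{q(\cdot)}}<\infty$ and the uniform $L^{\sigma_i(\cdot)}$ bounds on $\Rdf_i\tilde h_i$. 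Adding this finiteness verification closes your argument.
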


\begin{remark}
It will be clear from the proof that we can weaken the hypothesis that each
$p_i(\cdot)$ is log-H\"older continuous, and instead assume that the maximal
operator is bounded on a certain family of variable exponent Lebesgue spaces. Details are left to the
interested reader.
\end{remark}

\begin{remark}
Theorem~\ref{thm:xtpl} is stated so that its hypotheses coincide with
the weighted results in Theorem~\ref{thm:genver}.  We can also prove
an extrapolation theorem starting from the weaker conclusion given in
Theorem~\ref{th-001}.  The proof below can be modified, but we 
need to assume that the exponents $p_i(\cdot)$ have bounded
oscillation:  more precisely, that
\[ p_i < [p_i(\cdot)]_- \leq [p_i(\cdot)]_+ < \frac{qp_i}{q-p}.  \]
Details of this result are left to the interested reader.  The fact
that we can remove this upper bound on $[p_i(\cdot)]_+$ is another
reason for proving the stronger result in  Theorem~\ref{thm:genver}. 
\end{remark}

\begin{proof}
For our proof we need a family of Rubio de Francia
iteration algorithms.  To construct them, we will define some exponent
functions and show that the maximal operator is bounded on the
associated variable Lebesgue space.  
By \eqref{eqn:r-defn}, for each $i$, $1\leq i \leq m$, we can fix $\gamma_i>0$ such that
\[ \gamma = \sum_{i=1}^n \gamma_i, \]
and $[p_i(\cdot)]_+<n/\gamma_i$.  Define $q_i>0$ by
\[ \frac{1}{p_i}-\frac{1}{q_i} = \frac{\gamma_i}{n} \]
and define the variable exponents $q_i(\cdot)$ by
\[ \frac{1}{p_i(\cdot)} - \frac{1}{q_i(\cdot)} = \frac{\gamma_i}{n}.  \]
But then by~\eqref{eqn:s-defn} we have that
\[ \frac{1}{q(\cdot)} =
  \frac{1}{q_1(\cdot)}+\cdots+\frac{1}{q_m(\cdot)}, \]
and so
\[ \frac{1}{[\qq]_-} \le \sum_{i=1}^m \frac{1}{[p_i(\cdot)]_-}  -
  \frac{\gamma}{n}
  < \sum_{i=1}^m \frac{1}{p_i}  -
  \frac{\gamma}{n} = \frac{1}{q}.  \]
Therefore, if we define $\overline{q}(\cdot)= \qq/q$, then
$[\overline{q}(\cdot)]_- >1$.  Similarly, if we define
$\overline{p}_i(\cdot)= p_i(\cdot)/p_i$,
then $[\overline{p}_i(\cdot)]_- >1$.

Now let $\sigma_i(\cdot)= \frac{p_i}{q_i} \overline{p}_i'(\cdot)$.  We
claim that $[\sigma_i(\cdot)]_->1$.  However, this inequality follows
from some standard estimates for dual exponents in the variable
exponent Lebesgue spaces \cite[p.~14]{cruz-fiorenza-book}: this
inequality is equivalent to $[\overline{p}_i'(\cdot)]_->
\frac{q_i}{p_i}$, which in turn is equivalent to $[\overline{p}_i(\cdot)]_+'>
\frac{q_i}{p_i}$, and this in turn is equivalent to
\[ [p_i(\cdot)]_+ < p_i\left(\frac{q_i}{p_i}\right)'  =
  \frac{n}{\gamma_i}, \]
which we know to hold.  

We also have that each $\sigma_i(\cdot)$ is log-H\"older continuous
since each $p_i(\cdot)$ is.   Therefore, the Hardy-Littlewood maximal
operator is bounded on $L^{\sigma_i(\cdot)}$.  Hence,
 we can define the iteration operator $\Rdf_i$, acting on
non-negative functions $h$, by
\[ \Rdf_i h(x) = \sum_{j=0}^\infty \frac{M^jh(x)}{2^j
    \|M\|_{L^{\sigma_i(\cdot)}}^j}, \]
where $M^jh = M\circ \cdots \circ Mh$ is $j$ iterates of the maximal
operator, and $M^0h = h$.  Then by a standard argument
\cite[p.~210]{cruz-fiorenza-book} and a rescaling property of $A_1\cap RH_s$ weights
\cite{cruz-uribe-neugebauer95},
we have the following:
\begin{enumerate}

\item $h(x) \leq \Rdf_i h(x)$;

\item $\|\Rdf_i h\|_{L^{\sigma_i(\cdot)}} \leq
  2\|h\|_{L^{\sigma_i(\cdot)}}$;

\item $\Rdf_i h \in A_1$, and $[\Rdf_i h]_{A_1} \leq 2
  \|M\|_{L^\sigma_i(\cdot)}$;

\item $(\Rdf_i h)^{p_i/q_i} \in A_1 \cap RH_{q_i/p_i}$, and
  $[(\Rdf_i h)^{p_i/q_i}]_{RH_{q_i/p_i}}$ depends only on
  $[\Rdf_i h]_{A_1}$. 

\end{enumerate}

Define a family of auxiliary exponents $\theta_i$, $1\leq i \leq
m$, by 
\[ \tk = \frac{q \overline{q}'(\cdot)}{p_i\overline{p}_i'(\cdot)}. \]
Then 
\begin{multline} \label{eqn:tk}
 \sum_{i=1}^m \tk
= q \overline{q}'(\cdot) \sum_{i=1}^m \frac{1}{p_i\overline{p}_i'(\cdot)}
= q \overline{q}'(\cdot) \sum_{i=1}^m \frac{1}{p_i}\bigg(1-\frac{p_i}{\overline{p}_i(\cdot)}\bigg) \\
= q \overline{q}'(\cdot)\sum_{i=1}^m \bigg(\frac{1}{p_i}-\frac{1}{p_i(\cdot)} \bigg)
= \overline{q}'(\cdot)- \frac{\overline{q}'(\cdot)}{\overline{q}(\cdot)} = 1. 
\end{multline}

\medskip

We can now prove the desired inequality.   Fix $(f_1,\ldots,f_m,F)\in \F$ such
that $F\in L^\qq$.  Since $\overline{q}(\cdot)>1$, by
rescaling and the associate norm in variable exponent Lebesgue
spaces~\cite[Prop.~2.18, Thm.~2.34]{cruz-fiorenza-book},  there exists $h\in L^{\overline{q}'(\cdot)}$,
$\|h\|_{L^{\overline{q}'(\cdot)}}=1$, such that
\begin{multline} \label{eqn:xtrpl3}
\|F\|_{L^{\qq}}^q
 = \|F^q\|_{L^{\overline{q}(\cdot)}} 
 \lesssim \int_{\R^n} F^q h\,dx \\
 = \int_{\R^n} F^q \prod_{i=1}^m h^{\tk}\,dx 
 \lesssim \int_{\R^n} F^q \prod_{i=1}^m \bigg[ \Rdf_i\big(
  h^{\frac{\overline{q}'(\cdot)q_i}{\overline{p}_i'(\cdot) p_i}}\big)^{\frac{p_i}{q_i}}\bigg]^{\frac{q}{p_i}}
  \,dx. 
\end{multline}
By construction, we have that for each $i$
\[ \Rdf_i\big(
  h^{\frac{\overline{q}'(\cdot)q_i}{\overline{p}_i'(\cdot) p_i}}\big)^{\frac{p_i}{q_i}}\in A_1 \cap
  RH_{q_i/p_i}.  \]
Assume for the moment that the last term in the above inequality is finite.  If it is, then we can apply our hypothesis~\eqref{eqn:xtpl1} and the
generalized H\"older's inequality in the scale of variable Lebesgue
spaces \cite[Theorem.~2.26]{cruz-fiorenza-book} to get
\begin{multline*}
 \int_{\R^n} F^q \prod_{i=1}^m \bigg[ \Rdf_i\big(
  h^{\frac{\overline{q}'(\cdot)q_i}{\overline{p}_i'(\cdot) p_i}}\big)^{\frac{p_i}{q_i}}\bigg]^{\frac{q}{p_i}}
  \,dx  \\
 \lesssim \prod_{i=1}^m \left(\int_{\R^n} f_i^{p_i} 
\Rdf_i\big(
  h^{\frac{\overline{q}'(\cdot)q_i}{\overline{p}_i'(\cdot) p_i}}\big)^{\frac{p_i}{q_i}}
  \,dx
\right)^{\frac{q}{p_i}} 
 \lesssim \prod_{i=1}^m \|f_i^{p_i}\|_{L^{\overline{p}_i(\cdot)}}^{\frac{q}{p_i}} \;
\| \Rdf_i\big(
  h^{\frac{\overline{q}'(\cdot)q_i}{\overline{p}_i'(\cdot) p_i}}\big)^{\frac{p_i}{q_i}} \|_{L^{\overline{p}_i'(\cdot)}}^{\frac{q}{p_i}} .
\end{multline*}
Again by rescaling we have that
$\|f_i^{p_i}\|_{L^{\overline{p}_i(\cdot)}}^{\frac{q}{p_i}}
=\|f_i\|_{L^{p_i(\cdot)}}^q$.  Thus to complete the proof of
inequality~\eqref{eqn:xtpl2} we will show that 
\begin{equation} \label{eqn:rdf-norm}
\| \Rdf_i\big(
  h^{\frac{\overline{q}'(\cdot)q_i}{\overline{p}_i'(\cdot) p_i}}\big)^{\frac{p_i}{q_i}} \|_{L^{\overline{p}_i'(\cdot)}}
=
  \| \Rdf_i\big(
  h^{\frac{\overline{q}'(\cdot)q_i}{\overline{p}_i'(\cdot)
      p_i}}\big) \|_{L^{\sigma_i(\cdot)}}^{\frac{p_i}{q_i}}
\lesssim 1. 
\end{equation}
By the properties of the iteration operator and rescaling,
\[ \| \Rdf_i\big(
  h^{\frac{\overline{q}'(\cdot)q_i}{\overline{p}_i'(\cdot)
      p_i}}\big) \|_{L^{\sigma_i(\cdot)}}^{\frac{p_i}{q_i}}
  \lesssim 
\|  h^{\frac{\overline{q}'(\cdot)q_i}{\overline{p}_i'(\cdot)
    p_i}} \|_{L^{\sigma_i(\cdot)}}^{\frac{p_i}{q_i}}
= \|  h^{\frac{\overline{q}'(\cdot)}{\overline{p}_i'(\cdot)
    }} \|_{L^{\overline{p}_i'(\cdot)}}
. \]
By the relationship between the norm and the modular in variable
exponent spaces \cite[Prop.~2.21]{cruz-fiorenza-book}, since
$\|h\|_{L^{\overline{q}'(\cdot)}} =1$,
\[ 1 = \int_{\R^n} h(x)^{\overline{q}'(x)}\,dx 
= \int_{\R^n} \bigg(
h(x)^{\frac{\overline{q}'(x)}{\overline{p}_i'(x)}}\bigg)^{\overline{p}_i'(x)}\,dx, \]
and this in turn implies that 
\[ \|  h^{\frac{\overline{q}'(\cdot)}{\overline{p}_i'(\cdot)
    }} \|_{L^{\overline{p}_i'(\cdot)}}= 1. \]

\medskip

Finally, to complete the proof we need to justify our assumption that the last term
in~\eqref{eqn:xtrpl3} is finite.   
If we divide the second and last
terms of the identity~\eqref{eqn:tk} by $\overline{q}'(\cdot)$, we get
\[ 1 = \frac{1}{\overline{q}(\cdot)} + \sum_{i=1}^m
  \frac{q}{p_i}\frac{1}{\overline{p}_i'(\cdot)}
= \frac{1}{\overline{q}(\cdot)}.\]
Hence, by the multi-term generalized H\"older's inequality in variable
exponent Lebesgue spaces~\cite[Cor.~2.30]{cruz-fiorenza-book},
\[ \int_{\R^n} F^q \prod_{i=1}^m \bigg[ \Rdf_i\big(
  h^{\frac{\overline{q}'(\cdot)q_i}{\overline{p}_i'(\cdot) p_i}}\big)^{\frac{p_i}{q_i}}\bigg]^{\frac{q}{p_i}}
  \,dx  
\lesssim
\|F^q\|_{L^{\overline{q}(\cdot)}} \prod_{i=1}^m 
\|\bigg[ \Rdf_i\big(
  h^{\frac{\overline{q}'(\cdot)q_i}{\overline{p}_i'(\cdot)
      p_i}}\big)^{\frac{p_i}{q_i}}\bigg]^{\frac{q}{p_i}}\|_{L^{p_i\overline{p}_i'(\cdot)/q}}. \]
By rescaling, the first term becomes $\|F\|_{L^{\qq}}^q$ which is
finite, and by rescaling and ~\eqref{eqn:rdf-norm} we see that the remaining terms are
all uniformly bounded.
\end{proof}

\medskip

Theorem~\ref{th-002} follows directly from Theorem~\ref{thm:xtpl} and
a careful density argument.

\begin{proof}[Proof of Theorem \ref{th-002}]
  From condition \eqref{eqn:r-defn} we can find $p_i>0$ such that
  $p_i<[p_i(\cdot)]_-$ and
  \[ \frac{1}{p_1}+\cdots+\frac{1}{p_m} > \frac{\gamma}{n}. \]
  Therefore, by Theorem~\ref{thm:genver}, given $q_i>p_i$ such
  that $w_i \in RH_{q_i/p_i}$, inequality~\eqref{eq-006} holds.  We
  can use this to apply Theorem~\ref{th-002} if we can define the
  appropriate family $\F$ of extrapolation $(m+1)$-tuples.

 Since each $p_i(\cdot)$ is log-H\"older continuous and
 $[p_i(\cdot)]_->0$, $1\leq i \leq m$, there exists an $N$ depending
 only on the $p_i(\cdot)$ and on $n$ such that functions of the form
\begin{equation*} 
 f = \sum_{j=1}^M \lambda_j a_j,
\end{equation*}
where each $a_j$ is an $(N,\infty)$ atom, are dense in
$H^{p_i(\cdot)}$~\cite[Theorem.~6.3]{DCU-dw-P2014}.  All such functions
are also contained in $H^p(w)$, for any $p>0$ and $w\in A_\infty$.  
Denote the set of such functions by
$\mathcal{A}$.    Define the family of $(m+1)$-tuples
$\F=\{(f_1,\ldots,f_m,F_R)\}$, where $f_i=M_{N_0}g_i$, $g_i \in
\mathcal{A}$, $R>0$, and
\[ F_R = \min\big( |T_\gamma(g_1,\ldots,g_m)|,R\big)\chi_{B(0,R)}.  \]
Since
$\|\chi_{B(0,R)}\|_{L^\qq}<\infty$~\cite[Lemma~2.39]{cruz-fiorenza-book},
 we have that $\|F_R\|_{L^\qq}<\infty$.  
Further, given any weights $w_i \in RH_{q_i/p_i}$, and
$\overline{w}=\prod_{i=1}^m w_i^{q/p_i}$, by H\"older's inequality
with exponents $q_i/q$ we have that 
\[ \|F_R\|_{L^q(\overline{w})} \leq
  R\left(\int_{B(0,R)} \prod_{i=1}^m w_i^{q/p_i}\,dx\right)^{1/q}
  \leq R \prod_{i=1}^m \left(\int_{B(0,R)} w_i^{q_i/p_i}\,dx\right)^{1/q_i}
< \infty. \]
But then by~\eqref{eq-006} we have that  given any
 $(m+1)$-tuple in $\F$,
\[ \|F_R\|_{L^p(\overline{w})} \lesssim \prod_{i=1}^m
  \|g_i\|_{H^{p_i}(w_i)}
= \prod_{i=1}^m \|f_i\|_{L^{p_i}(w_i)}, \]
which gives us~\eqref{eqn:xtpl1}.  Therefore, by
Theorem~\ref{thm:xtpl}, 
\[ \|F_R\|_{L^\qq} \lesssim \prod_{i=1}^m \|f_i\|_{L^{p_i(\cdot)}} =
  \prod_{i=1}^m \|g_i\|_{H^{p_i(\cdot)}}. \]
By Fatou's lemma in the  variable exponent Lebesgue
spaces~\cite[Theorem.~2.61]{cruz-fiorenza-book},
\[ \|T(g_1,\ldots,g_m)\|_{L^\qq} \leq \liminf_{R\rightarrow \infty}
\|F_R\|_{L^\qq}
 \lesssim \prod_{i=1}^m \|g_i\|_{H^\qk}. \]
This establishes the desired norm inequality of $T$ for a dense family of
functions, and Theorem~\ref{th-002} follows by a standard approximation argument.
\end{proof}

\bibliographystyle{plain}


\begin{thebibliography}{10}

\bibitem{cruz-fiorenza-book}
D.~Cruz-Uribe and A.~Fiorenza.
\newblock {\em Variable Lebesgue Spaces: Foundations and Harmonic Analysis}.
\newblock Birkh\"auser, Basel, 2013.

\bibitem{cruz-uribe-fiorenza-martell-perez06}
D.~Cruz-Uribe, A.~Fiorenza, J.~M. Martell, and C.~P{\'e}rez.
\newblock The boundedness of classical operators on variable {$L\sp p$} spaces.
\newblock {\em Ann. Acad. Sci. Fenn. Math.}, 31(1):239--264, 2006.

\bibitem{CruzUribe:2018hg}
D.~Cruz-Uribe and J.~M. Martell.
\newblock {Limited range multilinear extrapolation with applications to the
  bilinear {H}ilbert transform}.
\newblock {\em Math. Ann.}, 212(1):225--41, 2018.

\bibitem{MR2797562}
D.~Cruz-Uribe, J.~M. Martell, and C.~P\'{e}rez.
\newblock {\em Weights, extrapolation and the theory of {R}ubio de {F}rancia},
  volume 215 of {\em Operator Theory: Advances and Applications}.
\newblock Birkh\"{a}user/Springer Basel AG, Basel, 2011.

\bibitem{CMP11}
D.~Cruz-Uribe, J.~M. Martell, and C.~P{\'e}rez.
\newblock {\em Weights, extrapolation and the theory of {R}ubio de {F}rancia},
  volume 215 of {\em Operator Theory: Advances and Applications}.
\newblock Birkh\"auser/Springer Basel AG, Basel, 2011.

\bibitem{CMN19}
D.~Cruz-Uribe, K.~Moen, and H.~V. Nguyen.
\newblock A new approach to norm inequalities on weighted a variable hardy
  spaces.
\newblock {\em preprint}, 2019.
\newblock arXiv:1902.01519.

\bibitem{CMN18}
D.~Cruz-Uribe, K.~Moen, and H.~V. Nguyen.
\newblock The boundedness of multilinear {C}alder\'on-{Z}ygmund operators on
  weighted and variable {H}ardy spaces.
\newblock {\em Publ.~Mat.}, to appear.
\newblock arXiv:1708.07195.

\bibitem{CN16}
D.~Cruz-Uribe and V.~Naibo.
\newblock {Kato-Ponce inequalities on weighted and variable Lebesgue spaces}.
\newblock {\em Differential and Integral Equations}, 29(9-10):801--836, 2016.

\bibitem{cruz-uribe-neugebauer95}
D.~Cruz-Uribe and C.~J. Neugebauer.
\newblock The structure of the reverse {H}\"older classes.
\newblock {\em Trans. Amer. Math. Soc.}, 347(8):2941--2960, 1995.

\bibitem{DCU-HN}
D.~Cruz-Uribe and H.~V. Nguyen.
\newblock Multilinear multipliers and singular integrals with smooth kernels on
  hardy spaces.
\newblock {\em preprint}, 2018.

\bibitem{DCU-dw-P2014}
D.~Cruz-Uribe and L.-A. Wang.
\newblock Variable {H}ardy spaces.
\newblock {\em Indiana Univ. Math. J.}, 63(2):447--493, 2014.

\bibitem{MR784004}
A.~E. Gatto, C.~E. Guti\'{e}rrez, and R.~L. Wheeden.
\newblock Fractional integrals on weighted {$H^p$} spaces.
\newblock {\em Trans. Amer. Math. Soc.}, 289(2):575--589, 1985.

\bibitem{MR1852036}
L.~Grafakos and N.~Kalton.
\newblock Multilinear {C}alder\'{o}n-{Z}ygmund operators on {H}ardy spaces.
\newblock {\em Collect. Math.}, 52(2):169--179, 2001.

\bibitem{MR1682725}
C.~E. Kenig and E.~M. Stein.
\newblock Multilinear estimates and fractional integration.
\newblock {\em Math. Res. Lett.}, 6(1):1--15, 1999.

\bibitem{MR2353641}
Y.~Lin and S.~Lu.
\newblock Boundedness of multilinear singular integral operators on {H}ardy and
  {H}erz-type spaces.
\newblock {\em Hokkaido Math. J.}, 36(3):585--613, 2007.

\bibitem{MR0340523}
B.~Muckenhoupt and R.~Wheeden.
\newblock Weighted norm inequalities for fractional integrals.
\newblock {\em Trans. Amer. Math. Soc.}, 192:261--274, 1974.

\bibitem{MR3431617}
E.~T. Sawyer, C.-Y. Shen, and I.~Uriarte-Tuero.
\newblock A note on failure of energy reversal for classical fractional
  singular integrals.
\newblock {\em Int. Math. Res. Not. IMRN}, (19):9888--9920, 2015.

\bibitem{MR3470665}
E.~T. Sawyer, C.-Y. Shen, and I.~Uriarte-Tuero.
\newblock A two weight theorem for {$\alpha$}-fractional singular integrals
  with an energy side condition.
\newblock {\em Rev. Mat. Iberoam.}, 32(1):79--174, 2016.

\bibitem{MR1011673}
J.-O. Str{\"o}mberg and A.~Torchinsky.
\newblock {\em Weighted {H}ardy spaces}, volume 1381 of {\em Lecture Notes in
  Mathematics}.
\newblock Springer-Verlag, Berlin, 1989.

\bibitem{MR766221}
J.-O. Str\"{o}mberg and R.~L. Wheeden.
\newblock Fractional integrals on weighted {$H^p$} and {$L^p$} spaces.
\newblock {\em Trans. Amer. Math. Soc.}, 287(1):293--321, 1985.

\end{thebibliography}
\end{document}